\newcommand{\pch}{\chi_{\rho}}
\newcommand{\pcS}{\chi_{S}}
\newcommand{\qed}{\hfill $\square$ \bigskip}
\newtheorem{theorem}{Theorem}[section]
\newtheorem{corollary}[theorem]{Corollary}
\newtheorem{proposition}[theorem]{Proposition}
\newtheorem{conjecture-conclude}{Conjecture}
\newtheorem{problem-conclude}[conjecture-conclude]{Problem}
\newcommand{\NN}{\mathbb N}
\newcommand{\ZZ}{\mathbb Z}
\newcommand\bonusspiral{} % just for safety
\def\bonusspiral[#1](#2)(#3:#4)(#5:#6)[#7]{% \bonusspiral[draw options](placement)(start angle:end angle)(start radius:final radius)[revolutions]
	\pgfmathsetmacro{\domain}{#4+#7*360}
	\pgfmathsetmacro{\growth}{180*(#6-#5)/(pi*(\domain-#3))}
	\draw [#1,
	shift={(#2)},
	domain=#3*pi/180:\domain*pi/180,
	variable=\t,
	smooth,
	samples=int(\domain/5)] plot ({\t r}: {#5+\growth*\t-\growth*#3*pi/180})
}
\begin{document}

\title{$S$-packing colorings of distance graphs $G(\mathbb{Z},\{2,t\})$}

\author{
Bo\v{s}tjan Bre\v{s}ar$^{a,b}$  \and Jasmina Ferme$^{c,a,b}$ \and Karol\' ina Kamenick\' a$^{d}$
}

\date{\today}

\maketitle

\begin{center}
$^a$ Faculty of Natural Sciences and Mathematics, University of Maribor, Slovenia\\
\medskip

$^b$ Institute of Mathematics, Physics and Mechanics, Ljubljana, Slovenia\\
\medskip

$^c$ Faculty of Education, University of Maribor, Slovenia\\
\medskip

$^d$ University of West Bohemia, Plze\v n, Czech Republic
\\
%\medskip

%$^e$
\end{center}

\begin{abstract}
Given a graph $G$ and a non-decreasing sequence $S=(a_1,a_2,\ldots)$ of positive integers, the mapping $f:V(G) \rightarrow \{1,\ldots,k\}$ is an $S$-packing $k$-coloring of $G$ if for any distinct vertices $u,v\in V(G)$ with $f(u)=f(v)=i$ the distance between $u$ and $v$ in $G$ is greater than $a_i$. The smallest $k$ such that $G$ has an $S$-packing $k$-coloring is the $S$-packing chromatic number, $\chi_S(G)$, of $G$. In this paper, we consider the distance graphs $G(\mathbb{Z},\{2,t\})$, where $t>1$ is an odd integer, which has $\ZZ$ as its vertex set, and $i,j\in\ZZ$ are adjacent if $|i-j|\in\{2,t\}$.
We determine the $S$-packing chromatic numbers of the graphs $G(\mathbb{Z},\{2,t\})$, where $S$ is any sequence with $a_i\in\{1,2\}$ for all $i$. In addition, we give lower and upper bounds for the $d$-distance chromatic numbers of the distance graphs $G(\mathbb{Z},\{2,t\})$, which in the cases $d\ge t-3$ give the exact values. Implications for the corresponding $S$-packing chromatic numbers of the circulant graphs are also discussed. 
\end{abstract}

\noindent {\bf Key words:} $S$-packing coloring, $S$-packing chromatic number, distance graph, distance coloring.

\medskip\noindent
{\bf AMS Subj.\ Class: 05C15, 05C12}

%%%%%%%%%%%%%%%%%%
%%%%%%%%%%%%%%%%%%
\section{Introduction}
\label{sec:intro}
%%%%%%%%%%%%%%%%%%
%%%%%%%%%%%%%%%%%%
Let $G$ be a graph, and let $S=(a_1,a_2,\ldots)$ be a non-decreasing sequence of positive integers. 
A mapping $f:V(G)\rightarrow \{1,2,\ldots\}$ such that for every two vertices $u,v$ with $f(u)=f(v)=i$ the distance $d_G(u,v)$ is bigger than $a_i$, is {\em an $S$-packing coloring} of a graph $G$. (The {\em distance} $d_G(u,v)$ is the length of a shortest path between $u$ and $v$ in $G$.) If such a mapping $f$ exists, then $G$ is {\em $S$-packing colorable}.
 If there exists an integer $k$ such that the range $R_f$ of $f$ is $\{1,\ldots,k\}$, then $f$ is an {\em $S$-packing $k$-coloring}. The {\em $S$-packing chromatic number} of $G$, denoted by $\pcS(G)$, is the smallest integer $k$ such that $G$ admits an $S$-packing $k$-coloring.  For the sequence $S=(i)_{i\in\mathbb{N}}$ (of natural numbers in the standard order) the above concepts are known under the names {\em packing coloring}, {\em packing $k$-coloring} and the {\em packing chromatic number}, $\pch(G)$, of $G$. These concepts were introduced by Goddard, S.M.~Hedetniemi, S.T.~Hedetniemi, Harris, and Rall~\cite{goddard-2008} with a motivation coming from distibuting broadcast frequences to radio stations.  The current names were given in the second paper on the topic~\cite{bkr-2007}, and a number of authors considered these colorings later on; see a recent survey of Bre\v{s}ar, Ferme, Klav\v{z}ar and Rall~\cite{BFKR}. In addition, for an integer $d$ the sequence $S=(d,d,\ldots)$ yields the {\em $d$-distance coloring} of a graph, which has also been extensively studied; see a survey~\cite{kramer}.

A lot of attention was given to $S$-packing colorings, where $S$ contains only integers $1$ and $2$, since they lie between the classical colorings (where $S=(1,1,\ldots)$) and $2$-distance colorings (where $S=(2,2,\ldots)$). In addition, an intriguing conjecture on the packing chromatic numbers of subdivisions of cubic graphs~\cite{balogh-2019,bkrw-2017b, gt-2016} initiated several related studies. Gastineau and Togni proved that every subcubic graph is $(1, 1, 2, 2, 2)$-packing colorable~\cite{gt-2016}. Bre\v{s}ar, Klav\v{z}ar, Rall, and Wash~\cite{bkrw-2017b} investigated which subcubic graphs are $(1, 1, 2, 2)$-packing colorable, and this was continued by Liu, Liu, Rolek, and Yu~\cite{liu-2019+}.
Goddard and Xu in~\cite{goddard-2012} studied $S$-packing colorings in the two-way infinite path for different sequences $S$, which was initiated already in the seminal paper~\cite{goddard-2008}. For instance, they proved that $\chi_S(P_\infty)=3$ if and only if $S$ starts with either $(1,2,3)$, $(1,3,3)$, or $(2,2,2)$; see~\cite{ght-2019} for some additional results. Goddard and Xu~\cite{goddard-2014} considered $S$-packing colorings of the square lattice and some other related infinite graphs. Concerning the complexity issues, they proved that determining whether a graph is $(1, 1, k)$-packing colorable for any $k\ge 1$, or $(1, 2, 2)$-packing colorable, are NP-hard problems~\cite{goddard-2012}. These being very difficult problems in general, some authors were searching for classes of graphs and sequences $S$ in which the computational complexity of determining the $S$-packing chromatic number is polynomial; see such an investigation of Gastineau~\cite{g-2015}.

For a finite set of positive integers $D = \{d_1,\dots, d_k\}$, the {\em integer distance graph} $G(\mathbb{Z},D)$ with respect to $D$ is the infinite graph with $\mathbb{Z}$ as the vertex set and two distinct vertices $i, j \in \mathbb{Z}$ are adjacent if and only if $|i-j| \in D$.
The study of packing colorings of distance graphs was initiated by Togni~\cite{togni-2014}. He focused mainly on packing colorings of the distance graph $G(\mathbb{Z},D)$, where $1\in D$.  The study of the packing chromatic number of these graphs was continued by Ekstein, Holub, and Lidick\'y~\cite{ekstein-2012}, Ekstein, Holub, and Togni~\cite{ekstein-2014}, and Shao and Vesel~\cite{shao-2015}. 	
In this paper, the $S$-packing chromatic numbers of the distance graph $G(\mathbb{Z},D)$, where $D = \{2,t\}$ are considered. Note that if $t$ is even, then $G(\mathbb{Z},D)$ consists of two components, which are both isomorphic to $G(\mathbb{Z},\{1,t/2\})$. We restrict our attention only on the case $G(\mathbb{Z},\{2,t\})$, where $t>2$ is an odd integer.

To simplify the notation in this paper, we let $G(\mathbb{Z},\{2,t\})$ be denoted by $G_t$, where $t\ge 3$ is an odd integer.
Hence, $V(G_t)= \mathbb{Z}$ and $i, j \in E(G_t)$ if and only if $|i - j| = 2$ or $|i - j| = t$. 
From results in~\cite{togni-2014,ekstein-2014}, one can derive that $\pch(G_3)=13$, and from~\cite{togni-2014,shao-2015} one can infer $14\le \pch(G_5)\le 15$. In addition, Ekstein et al.~\cite{ekstein-2014} proved that for $k$ even and $t\ge 923$ and $t$ relatively prime to $k$, $\pch(G(\mathbb{Z},\{k,t\}))\le 56$; in particular, we have $\pch(G_t)\le 56$ for any odd integer $t\ge 923$. In addition, $\pch(G_t)\ge 12$ for $t\geq 9$. On the other hand, it appears that $S$-packing colorings of distance graphs, where $S\ne (1,2,3,\ldots)$, have not yet been considered, and we want to make the first step in this direction. 

In this paper, the main focus is on $(a_1,a_2,\ldots)$-packing colorings of graphs $G_t$ with $a_i\in \{1,2\}$ for all $i$, which is done in Section~\ref{sec:main}. We start the section by determining that the chromatic number of $G_t$ equals $3$, and then deal with the sequences $S$ that start with either two $1$s or one $1$ and all other integers in the sequence are $2$s. In the former case (i.e., $S=(1,1,2,2,\ldots)$), we have $\chi_S(G_t) = 4$, while in the latter case (i.e., $S=(1,2,2,\ldots)$), $\chi_S(G_t) = 5$ if $t\ge 5$ and $\chi_S(G_3) = 6$. In Section~\ref{sec:dist} we deal with distance colorings of distance graphs. We give some lower and upper bounds in the general case of $d$-distance colorings for arbitrary $d$, and in some cases when $d\ge t-3$, we determine the exact values of the $d$-distance chromatic numbers. We also prove the exact values of the $2$-distance chromatic numbers of graphs $G_t$. In the final section we give some remarks about the $S$-packing colorings of the circulant graphs, which are related to the constructions (patterns) presented in this paper.

\section{$S$-packing colorings of graphs $G_t$}
\label{sec:main}

In this section, we determine the $S$-packing chromatic numbers of graphs $G_t$ for all sequences $S$ that start with $1$. As it turns out, there are only a few cases that need to be considered for the sequence $S=(a_1,a_2,\ldots)$ where $a_1=1$. First, if $a_i=1$ for all $i$, that is, the standard chromatic number, is dealt with in Subsection~\ref{ss:111}. We prove that $\chi(G_t)=3$, which means that only the cases when there are two $1$s in $S$ or there is just one $1$ in $S$ are left to be considered. The case when $a_1=1=a_2$ and $a_i=2$ for $i\ge 3$ is considered in Subsection~\ref{ss:1122}, while the case when $a_1=1$ and $a_i=2$ for all $i$ is considered in Subsection~\ref{ss:1222}.

For a positive integer $k$, let $[k]=\{1,\ldots,k\}$ and let $[k]_0=\{0,1,\ldots,k\}$. In proofs of upper bounds, we will often say that a coloring $f:\ZZ\rightarrow [k]$ {\em uses a pattern}
$$c_1\ldots c_{\ell},$$
where $c_i\in [k]$ for all $i\in [\ell]$. By this we mean that the subsequence of colors $c_1\ldots c_\ell$ is repeatedly given to consecutive vertices of $\ZZ$; for instance and without loss of generality: $f(i+j\ell)=c_i$ for all $i\in [\ell]$ and all $j\in \ZZ$. 

The term {\em packing coloring} comes from the concept of packing. Given a graph $G$ and a positive integer $i$, a set of vertices $S\subseteq V(G)$ is an {\em $i$-packing} if for every distinct vertices $u,v\in S$, $d_G(u,v)>i$. (The  integer $i$ in an $i$-packing $S$ is sometimes called the {\em size} of the packing $S$.) In particular, a $1$-packing is an {\em independent set}. Clearly, (proper) coloring of vertices of a graph is equivalent to partitioning the vertex set to independent sets. Similarly, $S$-packing coloring is equivalent to partitioning the vertex set into packings of sizes that appear in $S$. 

\subsection{$S = (1,1,\ldots)$}
\label{ss:111}
		
In this subsection, we determine the chromatic number of $G_t$ for all $t\ge 3$; in terms of $S$-packing colorings this is the $S$-packing chromatic number for $S=(1,1,1,\ldots)$. 
		
\begin{theorem} \label{111}
For any odd integer $t\ge 3$, $\chi(G_t) = 3$.
\end{theorem}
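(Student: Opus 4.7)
The plan is to prove the two inequalities $\chi(G_t)\ge 3$ and $\chi(G_t)\le 3$ separately.

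For the lower bound, I would exhibit an odd cycle in $G_t$, making essential use of the parity of $t$. Consider the closed walk starting at $0$ that uses two consecutive edges of length $t$ to traverse $0\to t\to 2t$, and then descends by $t$ edges of length $2$ back to $0$ via the vertices $2t,2t-2,2t-4,\ldots,4,2,0$. All listed vertices are distinct (the two odd-valued vertices involve $t$, while the remaining vertices $0,2,\ldots,2t$ are distinct and even), so this is a genuine cycle of length $2+t=t+2$. Because $t$ is odd, $t+2$ is odd, so $G_t$ is not bipartite and $\chi(G_t)\ge 3$.

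For the upper bound, I would produce an explicit proper $3$-coloring, splitting on the residue of $t$ modulo $3$. When $t\not\equiv 0\pmod 3$, take the period-$3$ coloring corresponding to the pattern $1\,2\,3$, that is, $f(i)\equiv i\pmod 3$: two vertices at distance $2$ differ in color by $2\not\equiv 0\pmod 3$, and two vertices at distance $t$ differ in color by $t\not\equiv 0\pmod 3$, so both adjacency constraints are satisfied. When $t\equiv 0\pmod 3$, the oddness of $t$ forces $t\equiv 3\pmod 6$, so instead I would use the period-$6$ pattern $1\,2\,2\,3\,3\,1$. A direct one-period inspection shows that every distance-$2$ pair receives different colors; moreover, since $t\equiv 3\pmod 6$, the distance-$t$ condition reduces to $f(i)\neq f(i+3)$, which is again immediate from the pattern.

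The only mildly substantive obstacle is the case $t\equiv 0\pmod 3$, where the naive mod-$3$ coloring fails because all distance-$t$ edges become monochromatic; the role of the period-$6$ pattern is precisely to break this resonance while still respecting the distance-$2$ constraint. Combining the odd-cycle lower bound with whichever of the two colorings applies yields $\chi(G_t)=3$ for every odd $t\ge 3$.
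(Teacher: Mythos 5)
Your proof is correct. The lower bound is exactly the paper's argument: the cycle $0,t,2t,2t-2,\ldots,2,0$ of odd length $t+2$. The upper bound, however, takes a genuinely different route. The paper colors greedily (first-fit), observing that each vertex has at most two already-colored neighbors ($i-2$ and $i-t$), so a third color is always free; this is uniform in $t$ and needs no case analysis, but it produces no explicit periodic pattern. You instead give explicit periodic colorings: $f(i)\equiv i\pmod 3$ when $3\nmid t$, and the period-$6$ pattern $122331$ when $t\equiv 3\pmod 6$ (both check out: in the second case the distance-$t$ constraint reduces to $f(i)\neq f(i+3)$, which holds for all six offsets). Your case split is a small cost, but the payoff is real: the paper's concluding section explicitly remarks that the greedy proof of this theorem yields no pattern and then has to construct separate patterns of length $t+2$ to handle the circulant graphs $C_n(2,t)$. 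Your colorings would give $\chi(C_n(2,t))=3$ directly whenever $3\mid n$ (resp.\ $6\mid n$), with much shorter periods independent of $t$, so in that respect your construction is a cleaner complement to the paper's.
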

\begin{proof}
To prove the lower bound $\chi(G_t) \ge 3$ it suffices to see that every $G_t$ has an odd cycle.  Indeed, the cycle $C:0,2,\ldots, 2t, t,0$ has $t+2$ vertices, which is an odd integer. 

Now, let us prove the upper bound for $\chi_S(G_t) \le 3$. We use the greedy (first-fit) algorithm, so that when we color a vertex $i$ its color depends on the colors of vertices $i-2$ and $i-t$. In the worst case these colors are distinct, and we still have one color available to color the vertex $i$. After coloring the first $t$ vertices, we do the same (by using the decreasing order, $0,-1,-2,\ldots,-t$) for the vertices left of the starting vertex, and then continue in the same way by alternating the directions.  \qed
\end{proof}

\subsection{$S = (1,1,2,2, \ldots)$}
\label{ss:1122}

Here we prove that $G_t$ is $(1,1,2,2)$-packing colorable. Moreover, the $S$-packing chromatic number of $G_t$ with $S=(1,1,2,2,\ldots)$ is $4$. 
	
\begin{theorem}\label{1122}
If $t\ge 3$ and $S=(1,1,2,2, \ldots)$, then $\chi_S(G_t) = 4$.
\end{theorem}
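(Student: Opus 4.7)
My plan is to establish the matching upper and lower bounds $\pcS(G_t)\le 4$ and $\pcS(G_t)\ge 4$ separately.

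For the upper bound, I would exhibit an explicit periodic $(1,1,2,2)$-packing $4$-coloring of $G_t$. The strategy is first to fix two disjoint 2-packings $V_3,V_4\subseteq\ZZ$ (to receive colors $3$ and $4$), chosen so that the complement $\ZZ\setminus(V_3\cup V_4)$ induces a bipartite subgraph of $G_t$; then a proper $2$-coloring of this residual bipartite graph supplies colors $1$ and $2$. A natural candidate for each of $V_3,V_4$ is either a single shifted arithmetic progression or a union of pairs $\{n,n+1\}$ spaced so that every pair of elements has $G_t$-distance $\ge 3$; the pair construction becomes available once $d_{G_t}(n,n+1)\ge 3$, which holds for all odd $t\ge 5$. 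The coloring should be periodic of period $O(t)$, and both conditions (independence of $V_1,V_2$ and 2-packing property of $V_3,V_4$) can then be verified by a finite local check within one period. Since the allowed shifts may depend on $t$, I expect a short case split on $t$ modulo a small integer (for instance $t \bmod 4$); for $t=3$ the period-$14$ pattern uses the two $G_3$-maximum-density $2$-packings $\{0,7\}+14\ZZ$ and $\{5,12\}+14\ZZ$ and $2$-colors the remaining ten residues.

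For the lower bound, assume for contradiction that $f:\ZZ\to\{1,2,3\}$ is a $(1,1,2)$-packing coloring, and set $V_3=f^{-1}(3)$. Then $V_3$ is a $2$-packing in $G_t$ and, since $V_1\cup V_2$ is bipartite in $G_t$, the set $V_3$ is also an odd-cycle transversal of $G_t$. I plan to derive a contradiction by comparing two densities. From above, the $2$-packing constraint restricts $V_3$: within any window $[a,a+w]$ of appropriate length $w$, all pairs have $G_t$-distance $\le 2$, forcing $|V_3\cap[a,a+w]|\le 1$ and hence density $\le 1/(w+1)$. From below, a double count over the shortest odd cycles of $G_t$, namely the $(t+2)$-cycles $C_a:a,a+2,\ldots,a+2t,a+t,a$ (each vertex lying in exactly $t+2$ such cycles), gives that any odd-cycle transversal has density $\ge 1/(t+2)$. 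When these bounds are incompatible, $V_3$ cannot exist. For $t=3$ the argument is sharp: $|V_3\cap[a,a+6]|\le 1$ gives density $\le 1/7$, while a direct double-count over the pentagons $\{a,a+2,a+3,a+4,a+6\}$ (each vertex in exactly $5$ such pentagons) yields transversal density $\ge 1/5>1/7$.

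The main obstacle is the lower bound for $t\ge 5$, where the simple bounds $1/(w+1)$ and $1/(t+2)$ are no longer incompatible because consecutive integers can belong to the same $2$-packing, increasing the achievable $V_3$ density (for instance up to roughly $1/6$ when $t=5$). To close this gap I would tighten the transversal bound by counting additional odd-cycle families (not only the $(t+2)$-cycles but also longer odd cycles that must simultaneously be hit) or, alternatively, perform a local case analysis on a short window showing that any $2$-packing meeting all odd cycles within it is forced to violate the $2$-packing distance constraint. This combinatorial step is where I expect the proof to be most delicate and to require case-by-case treatment of $t$.
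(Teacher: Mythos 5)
Your upper-bound plan is essentially the paper's: the patterns used there (for example $(1122)^k3(1122)^k4$ when $t=4k-1$ and $(1122)^k132(1122)^k142$ when $t=4k+1$, each of period $2(t+2)$) do exactly what you describe --- place colors $3$ and $4$ on two sparse $2$-packings and alternate $1,2$ on the rest --- so that part is a sketch of the right construction, though you would still need to exhibit and verify the patterns for general $t$ rather than only for $t=3$. The genuine gap is in the lower bound. As you yourself observe, for $t\ge 5$ consecutive integers satisfy $d_{G_t}(n,n+1)\ge 3$, so the $2$-packing density ceiling and the odd-cycle-transversal density floor obtained from the $(t+2)$-cycles are no longer in conflict (for $t=5$ one already finds $2$-packings of density $2/13>1/7$), and the two ways you propose to close this --- counting further odd-cycle families or an unspecified local case analysis --- are not carried out. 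A density comparison alone is unlikely to suffice, because the obstruction is structural rather than quantitative.

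The paper closes the lower bound with a parity argument that your proposal is missing. Assume a $(1,1,2)$-packing coloring $f$ exists and normalize so that $f(2+t)=3$; then the path $2,0,t,2t,2t+2$ lies entirely within distance $2$ of $2+t$, so it avoids color $3$ and must be properly $2$-colored by $\{1,2\}$, forcing $f(2)=f(t)=f(2t+2)$, say all equal to $1$. Now walk along the even path $Q:2,4,\ldots,2t$. Generically colors $1$ and $2$ alternate along $Q$; and if some $2k$ on $Q$ has color $3$, then the five vertices $2k-2,\,2k-2+t,\,2k+t,\,2k+t+2,\,2k+2$ form a path all within distance $2$ of $2k$, hence properly $2$-colored by $\{1,2\}$, which forces $f(2k-2)=f(2k+2)$ --- so the alternation parity along $Q$ survives each color-$3$ interruption. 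Since $Q$ has an even number of edges ($t$ odd), $f(2t)=f(2)=1=f(t)$, contradicting the adjacency of $t$ and $2t$. This local rigidity (color $3$ cannot break the $1/2$ alternation) is the key idea your transversal-density framework does not capture, and without it your lower bound is established only for $t=3$.
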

\begin{proof}
First, we prove that $\chi_S(G_t) \ge 4$ for $S=(1,1,2,2,\ldots)$, and the proof is by contradiction. Therefore, suppose that $G_t$ is $(1,1,2)$-packing colorable, and let $f:V(G)\rightarrow [3]$ be a $(1,1,2)$-packing coloring of $G_t$. Note that the vertices $v$ with $f(v)=1$, respectively $f(v)=2$, form an independent set, while the vertices $v$ with $f(v)=3$ form a $2$-packing of $G_t$. Assume without loss of generality that $f(2+t)=3$. Since $P:2,0,t,2t,2t+2$ is a path in $G_t$ all vertices of which are at distance at most $2$ from $2+t$, they are colored with a color distinct from $3$. We may assume without loss of generality that $f(2)=f(t)=f(2t+2)=1$. The path $Q:2,4,\ldots,2t$ is of even length (i.e., has an even number of edges), since $t$ is odd, and we have $f(2)=1$, $f(4)=2$. It is possible that for some $k\in \{3,\dots, t-1\}$, there is a vertex $x=2k$ such that $f(2k)=3$. Note that all vertices in the path $Q'=2k-2,2k-2+t,2k+t,2k+t+2,2k+2$ are at distance at most $2$ from $2k$, which implies that they are not given color $3$ by $f$. We infer that $f(2k-2)=f(2k+t)=f(2k+2)$. That is, vertices on the path $Q$ alternate between colors $1$ and $2$ with possible exceptions of vertices, which are colored by $3$, but in that case(s) the alternating nature of the colors on $Q$ does not change. Since $Q$ is of even length, we derive $f(2)=f(2t)=1$. This is a contradiction, since $f(2t)=1=f(t)$ and vertices $t$ and $2t$ are adjacent. 

 Next, we prove that $\chi_S(G_t) \le 4$ for $S=(1,1,2,2,\ldots)$ by forming an $S$-packing 4-coloring $f$ of the vertices of $G_t$.

\textbf{Case 1.} $t=4k-1$ \\
Let $f(j(4k+1))=3$ for all $j \in \ZZ$, $2 \nmid j$, and let $f(j(4k+1))=4$ for all $j \in \ZZ$, $2 | j$. Further, let $f(j(4k+1)+\ell)=1$ and $f(j(4k+1)+m)=2$ for any $j \in \ZZ$, $\ell \equiv 1, 2 \pmod 4$, $1 \leq \ell < 4k+1$ and $m \equiv 3, 4 \pmod 4$, $3 \leq m < 4k+1$. In this way, consecutive integers in $\mathbb{Z}$ are colored with the following pattern of colors: $$(1122)^k3(1122)^k4,$$ where $(1122)^k$ means the repetition of $1122$ $k$-times. We claim that the described coloring is an $S$-packing $4$-coloring, where $S=(1,1,2,2, \ldots)$. 

%% For colors 1 and 2
First, consider any two vertices $a, b \in V(G_t)$ such that $f(a)=f(b)=1$. This implies that there exist $j, j' \in \ZZ$ and $\ell, \ell' \in \ZZ$ with the properties that $1 \leq \ell, \ell' < (4k+1)$ (actually $\ell, \ell' \leq 4k-2$), 
$\ell \equiv 1, 2 \pmod 4$, $\ell' \equiv 1, 2 \pmod 4$ such that $a=j(4k+1)+\ell$ and $b=j'(4k+1)+\ell'$.
Suppose that $d_{G_t}(a,b)=1$ and without loss of generality assume that $a > b$. Then, $a-b \in \{2,t\}$. If $j=j'$, then $a-b=\ell-\ell'$. Clearly, $\ell-\ell' \neq 2$ and $\ell-\ell' \leq 4k-3 < t$, a contradiction to our assumption. Next, let $j \geq j'+2$. In this case, $a-b \geq 4k+5$. Therefore, $a-b \notin \{2,t\}$, again a contradiction to our assumption. Further, consider the case when $j=j'+1$. Then $a-b=4k+1+\ell-\ell'$, which is obviously greater than $2$. Now, $a-b=t$ implies that $\ell'-\ell=2$, but this is not possible since $\ell \equiv 1, 2 \pmod 4$ and $\ell' \equiv 1, 2 \pmod 4$. These findings imply that any two distinct vertices of $G_t$, both colored by $1$, are at distance at least $2$ in $G_t$. Analogously one can prove that the same holds for any two distinct vertices of $G_t$, both colored by $2$.

%% For colors 3 and 4
Next, suppose that there exist two distinct vertices $a=j(4k+1)$ and $b=j'(4k+1)$ of $G_t$, such that $j>j'$, $f(a)=f(b)=i \in \{3,4\}$ and $d_{G_t}(a,b) \leq 2$. Then, $a-b = (j-j')(4k+1) \in \{2, t, 4, 2+t, 2t, t-2\}$. If $a-b=(j-j')(4k+1) \in \{2,4\}$, then $4k+1=1$ %(since $4k+1$ is an odd integer) 
and hence $k=0$, a contradiction to $t$ being a positive integer. Further, let $a-b=(j-j')(4k+1) \in \{2+t, t, t-2\}$. Since each of the integers from $\{2+t, t, t-2\}$ is odd, we derive that $j-j'$ is also odd. But this is a contradiction to the fact that $f(a)=f(b)$ which implies that both $j$ and $j'$ are divisible by $2$ or both are not divisible by $2$ and hence $j-j'$ is even. In the case when  $a-b=2t$, we derive that $(j-j'-2)4k=-j+j'-2$. Since $(j-j'-2)4k \geq 0$ and $-j+j'-2 <0$, we have a contradiction. Therefore, any two distinct vertices of $G_t$, both colored by $i \in \{3,4\}$ are at distance at least $3$ in $G_t$.

\textbf{Case 2.} $t=4k+1$ \\
Let $f(j(4k+3))=3$ for all $j \in \ZZ$, $2 \nmid j$, and $f(j(4k+3))=4$ for all $j \in \ZZ$, $2 | j$. Next, let $f(j(4k+3)+\ell)=1$ and $f(j(4k+3)+m)=2$ for any $j \in \ZZ$, $\ell \equiv 2, 3 \pmod 4$, $2 \leq \ell < 4k+3$ and $m \equiv 0, 1 \pmod 4$, $1 \leq m < 4k+3$. By the described coloring, the consecutive integers of $\ZZ$ are colored with the following pattern of colors: $$(1122)^k132(1122)^k142,$$ where $(1122)^k$ means the repetition of $1122$ $k$-times. We prove that the described coloring is an $S$-packing $4$-coloring of $G_t$, where $S=(1,1,2,2, \ldots)$. 

%% For colors 1 and 2
First, let $a, b \in V(G_t)$ such that $f(a)=f(b)=1$. This implies that there exist $j, j' \in \ZZ$ and $\ell, \ell'$ with the properties that $2 \leq \ell, \ell' < (4k+3)$, $\ell \equiv 2, 3 \pmod 4$ and $\ell' \equiv 2, 3 \pmod 4$ such that $a=j(4k+3)+\ell$ and $b=j'(4k+3)+\ell'$. We prove that  $d_{G_t}(a,b) > 1$. Suppose to the contrary that $d_{G_t}(a,b)=1$ and without loss of generality assume that $a > b$. This implies that $a-b \in \{2,t\}$. 
If $j=j'$, then $a-b=\ell-\ell'$. Clearly, $\ell-\ell' \neq 2$ and $\ell-\ell' \leq 4k < t$, a contradiction to our assumption. Next, if $j \geq j'+2$, then $a-b \geq 4k+6$ and hence $a-b \notin \{2, t\}$, a contradiction. Finally, let  $j=j'+1$. Clearly, $a-b \neq 2$, hence $a-b=t$. This implies that $\ell'-\ell=2$, but this is not possible since $\ell \equiv 2, 3 \pmod 4$ and $\ell' \equiv 2, 3 \pmod 4$. Therefore, any two distinct vertices of $G_t$, both colored by $1$, are at distance at least $2$ in $G_t$. Analogously one can prove that the same holds for any two distinct vertices of $G_t$, both colored by $2$.

%% For colors 3 and 4
Now, suppose that there exist two distinct vertices $a=j(4k+3)$ and $b=j'(4k+3)$ of $G_t$ such that $j>j'$, $f(a)=f(b)=i \in \{3,4\}$ and $d_{G_t}(a,b) \leq 2$. Then, $a-b = (j-j')(4k+3) \in \{2, t, 4, 2+t, 2t, t-2\}$. If $a-b=(j-j')(4k+3) \in \{2,4\}$, then $4k+3=1$, which is not possible since $k$ is a positive integer. Next, let $a-b=(j-j')(4k+3) \in \{2+t, t, t-2\}$. Since each of the integers from ${2+t, t, t-2}$ is odd, we infer that $j-j'$ is also odd. But, due to the fact that $f(a)=f(b)$, both $j$ and $j'$ are divisible by $2$ or both are not divisible by $2$ and hence $j-j'$ is even, so we have a contradiction. 
In the case when  $a-b=2t$, we derive that $(j-j'-2)4k=-3(j-j')+2$. Since $(j-j'-2)4k \geq 0$ and $-3(j-j')+2 <0$, we have a contradiction. These findings imply that any two distinct vertices of $G_t$, both colored by $i \in \{3,4\}$ are at distance at least $3$. This completes the proof.
\qed
\end{proof}

\subsection{$S = (1,2,2,\dots)$}
\label{ss:1222}

In this subsection, we prove that $G_t$ is $(1,2,2,2,2)$-packing colorable for every odd integer $t>3$, and that the $S$-packing chromatic number of $G_t$ with $S=(1,2,2,2,2,\ldots)$ is $5$. On the other hand, if $t=3$, then $\chi_S(G_t)=6$. 

\begin{theorem}
\label{thm:12222}
If $S = (1,2,2,2,2,2, \ldots)$, then $\chi_S(G_3) = 6$.
\end{theorem}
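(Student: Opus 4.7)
The plan is to prove $\pcS(G_3) \leq 6$ by an explicit construction and $\pcS(G_3) \geq 6$ by a density argument.

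For the upper bound, I would exhibit the $(1,2,2,2,2,2)$-packing $6$-coloring given by the length-$7$ periodic pattern $1,1,2,3,4,5,6$; that is, $f(7k)=f(7k+1)=1$ and $f(7k+j)=j$ for $j\in\{2,3,4,5,6\}$ and $k\in\ZZ$. Any two vertices of color $1$ differ by $1$, $6$, $7$, or more, none of which lies in $\{2,3\}$, so color class $1$ is an independent set. For $j\in\{2,\ldots,6\}$, two vertices of color $j$ differ by a nonzero multiple of $7$, and $d_{G_3}(v,v+7)=3$ (realized by the path $v,v+2,v+4,v+7$), so each of these color classes is a $2$-packing.

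For the lower bound, suppose for contradiction that $f:\ZZ\to[5]$ is a $(1,2,2,2,2)$-packing $5$-coloring. A quick case analysis shows that any two distinct vertices $u,v$ with $|u-v|\leq 6$ satisfy $d_{G_3}(u,v)\leq 2$: the differences $2$ and $3$ give edges, while differences $1$, $4$, $5$, $6$ each admit an explicit $2$-path (for instance, $v,v+3,v+1$ witnesses $d_{G_3}(v,v+1)=2$). Hence every $2$-packing in $G_3$ has consecutive elements at least $7$ apart, giving $|f^{-1}(i)\cap[0,N]|\leq N/7+O(1)$ for $i\in\{2,3,4,5\}$. For the independent set $f^{-1}(1)$, consecutive gaps lie in $\{1,4,5,6,\ldots\}$, and two consecutive gaps cannot both equal $1$ (otherwise two elements of $f^{-1}(1)$ would differ by $2$). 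Hence any two consecutive gaps sum to at least $5$, which telescopes to $|f^{-1}(1)\cap[0,N]|\leq 2N/5+O(1)$. Summing over all five color classes,
$$N+1 \;=\; \sum_{i=1}^{5}|f^{-1}(i)\cap[0,N]| \;\leq\; \frac{2N}{5}+\frac{4N}{7}+O(1) \;=\; \frac{34N}{35}+O(1),$$
which is false for sufficiently large $N$.

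The main obstacle is establishing the sharp density bound $2/5$ for color $1$, which rests on the gap analysis above; the rest is arithmetic. A cleaner finite-window alternative avoiding asymptotic notation is to show directly that any window of $35$ consecutive integers contains at most $14$ vertices of color $1$ (by the same gap analysis) and at most $5$ vertices of each of colors $2,\ldots,5$, for a total of at most $34<35$, yielding the same contradiction in one shot.
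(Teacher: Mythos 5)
Your proof is correct, but both halves take a genuinely different route from the paper's. For the upper bound the paper uses a period-$25$ pattern $1123411562113451162311456$; your period-$7$ pattern $1123456$ does the same job with a much lighter verification (the color-$1$ differences are $1$ and the integers congruent to $0,\pm 1 \pmod 7$ that are at least $6$, none of which is $2$ or $3$; and since every edge of $G_3$ changes the integer by at most $3$, any two vertices differing by a nonzero multiple of $7$ are at distance at least $3$ --- this last observation is worth stating explicitly so that differences $14, 21,\dots$ are covered, not just $7$). Your pattern would even sharpen the paper's circulant corollary from $n=25m$ to $n=7m$. For the lower bound the paper argues locally: if two consecutive integers both received color $1$, the five integers $i-2,i-1,i+2,i+3,i+4$ would be pairwise at distance at most $2$ and would need five distinct colors from $\{2,3,4,5\}$; otherwise color-$1$ vertices pairwise differ by at least $4$, so among any seven consecutive integers at least five avoid color $1$, and these are pairwise at distance at most $2$, again exhausting the four remaining colors. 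You instead run a density count: each $2$-packing has upper density at most $1/7$ (same window-of-seven observation), and the key extra step is the bound $2/5$ for the independent color class via the fact that consecutive gaps lie in $\{1,4,5,6,\dots\}$ and no two consecutive gaps are both $1$, whence $2/5+4/7=34/35<1$. That gap analysis is correct, and your $35$-window reformulation ($\le 14$ vertices of color $1$ and $\le 5$ of each other color in $35$ consecutive integers) makes it just as finite and self-contained as the paper's argument. The paper's local argument is shorter; yours isolates the exact extremal densities ($2/5$ for a $1$-packing and $1/7$ for a $2$-packing in $G_3$), which is information the paper's proof does not surface.
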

\begin{proof}
First, prove that $\chi_S(G_3) \geq 6$. Suppose to the contrary that $\chi_S(G_3)\leq 5$ and denote by $f$ an $S$-packing $5$-coloring of $G_3$. Suppose that there exists $i\in V(G_3)$ such that $f(i)=1=f(i+1)$. Then note that for any two integers $j,k$ from the set $A=\{i-2,i-1,i+2,i+3,i+4\}$ we have $|j-k|\in [6]$, which implies that $d_{G_3}(j,k)\le 2$. Hence, since $f$ is an $S$-packing coloring and $f(j)\ne 1\ne f(k)$, we infer $f(j)\ne f(k)$. Thus, the five vertices in $A$ should get pairwise distinct colors from $\{2,3,4,5\}$, which is a contradiction. 

From the above paragraph we derive that no two consecutive integers can get color $1$ by $f$. Therefore, at least five of the values $f(0),f(1),f(2),f(3),f(4),f(5),f(6)$ are different from $1$, but again the integers from $\{0,\ldots,6\}$ are at pairwise distance at most $2$ in $G_t$, hence those integers that are not colored by $1$ should get distinct colors. This is a contradiction, which yields $\chi_S(G_3) \geq 6$.

Next, prove that $\chi_S(G_3) \leq 6$. We form an $S$-packing $6$-coloring of $G_3$ in such a way that we color the consecutive vertices of $\ZZ$ using the following pattern of colors: $$1123411562113451162311456.$$ 
It is easy to observe  that any two distinct vertices of $G_3$ colored by $1$ are non-adjacent. Next, for any two distinct vertices $a,b \in V(G_t)$, both colored by $i \in \{2,3,4,5,6\}$, we have $|a-b| \geq 7$, which implies that they are at distance at least $3$ in $G_3$. Hence, the described coloring is an $S$-packing $6$-coloring of $G_3$, thus $\chi_S(G_3) \leq 6$. 
\qed
\end{proof}

\begin{theorem}\label{t122}
For any odd integer $t>3$, $\chi_S(G_t) = 5$ if $S = (1,2,2,2,2,\ldots)$.
\end{theorem}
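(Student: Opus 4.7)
My plan is to prove the two bounds $\chi_S(G_t)\geq 5$ and $\chi_S(G_t)\leq 5$ separately.

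For the lower bound I argue by contradiction: assume that $f\colon V(G_t)\to\{1,2,3,4\}$ is a $(1,2,2,2)$-packing $4$-coloring of $G_t$. I split into two cases depending on whether color $1$ is used. If color $1$ is not used, then $\mathbb{Z}$ is partitioned by three $2$-packings of $G_t$. For any $2$-packing $P$ of $G_t$ the closed $1$-balls $\{N_1[v]:v\in P\}$ are pairwise disjoint (since distinct $u,v\in P$ have $d_{G_t}(u,v)>2$), and each ball has exactly $5$ vertices because $t\geq 5$ is odd, so $v,v\pm2,v\pm t$ are five distinct vertices. Counting on an interval $[0,n-1]$ then yields $n\leq \tfrac{3}{5}(n+2t)+O(1)$, which fails for large $n$, a contradiction. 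If instead some vertex $v$ has $f(v)=1$, I consider its four neighbors $v-t,v-2,v+2,v+t$. Their pairwise differences are $t-2$, $t+2$, $2t$, and $4$; all four of these differ from $2$ and from $t$ (since $t\geq 5$ is odd), so these four vertices are pairwise nonadjacent in $G_t$ and, having $v$ as a common neighbor, pairwise at distance exactly $2$. None of them may take color $1$ (each is adjacent to $v$), and any two at distance $\leq 2$ must lie in different $2$-packings; hence the four vertices would need four distinct colors from $\{2,3,4\}$, which is impossible.

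For the upper bound I plan to exhibit an explicit periodic $(1,2,2,2,2)$-packing $5$-coloring of $G_t$. Following the style of Theorem~\ref{1122}, I anticipate that the construction will split into two subcases depending on $t\bmod 4$ and use a period linear in $t$. In each subcase, color $1$ is placed on a suitably dense independent set whose pairwise differences avoid $\{2,t\}$, and the remaining positions are partitioned into four $2$-packings, each class having pairwise differences avoiding $\{2,4,t-2,t,t+2,2t\}$ (the set of differences that produce distance at most $2$ in $G_t$). One can verify such a pattern for $t=5$ with period $20$, and the general construction should follow the same spirit.

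The main obstacle will be the upper bound construction. Designing a periodic pattern (or finite family of patterns indexed by $t$ modulo a small integer) that simultaneously satisfies the independence condition on color $1$ and the four $2$-packing conditions, including the wrap-around across the period boundary, requires careful combinatorial design; moreover, the density bound $\tfrac{3}{5}$ used in the lower bound shows that color $1$ must carry at least about $\tfrac{2}{5}$ of the density, leaving little slack. By contrast, the lower bound reduces to the short two-case argument above, which exploits the fact that a color-$1$ vertex in $G_t$ has four pairwise ``close'' non-color-$1$ neighbors.
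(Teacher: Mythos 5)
Your lower bound argument is correct and, in its main case, is exactly the paper's: a vertex $x$ with $f(x)=1$ has four distinct neighbours $x\pm 2$, $x\pm t$, which are pairwise at distance at most $2$ (they share the common neighbour $x$), cannot receive colour $1$, and therefore need four distinct colours from the $2$-packing classes, forcing at least five colours in total. Your extra case in which colour $1$ is unused is handled correctly by the ball-counting density argument (each $2$-packing has upper density at most $1/5$, so three of them cannot cover $\mathbb{Z}$); under the paper's definition an $S$-packing $k$-colouring has range exactly $\{1,\ldots,k\}$, so this case is not strictly needed, but it does no harm. (A small slip in your closing remarks: with four $2$-packings available, colour $1$ only needs density about $1/5$, not $2/5$.)

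The genuine gap is the upper bound, which you yourself flag as ``the main obstacle'' and then do not carry out. Saying that a suitable periodic pattern ``should follow the same spirit'' as a $t=5$ example is not a proof, and the difficulty is real: no single uniform recipe works for all odd $t>3$. For instance, the natural period-$6$ pattern $123145$ (which the paper uses when $t=4k-1$ and $3\nmid t$) fails as soon as $3\mid t$, because then $2t\equiv 0\pmod 6$ and two vertices of the same colour from $\{2,3,4,5\}$ can be at $G_t$-distance $2$ via two steps of length $t$. The paper in fact needs three separate constructions: the period-$14$ pattern $11221331144155$ for $t=5$; for $t=4k+1$ with $k\ge 2$ a colouring defined on the representation of integers as $2i+jt$ (eight residue classes of the pair $(i\bmod 4, j\bmod 4)$ receive colours $2,3,4,5$, with colour $1$ on alternating positions of each line $L_i$), which is not a short linear pattern at all; the pattern $123145$ when $t=4k-1$ and $3\nmid t$; and the pattern $1(122133)^{v}1(144155)^{v}$ with $v=\tfrac{2(k-1)}{3}$ when $t=4k-1$ and $3\mid t$. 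Each of these requires its own verification that the relevant difference sets avoid $\{2,t\}$ (for colour $1$) and $\{2,4,t-2,t,t+2,2t\}$ (for the other colours). Until you exhibit and verify such constructions for every odd $t>3$, you have only proved $\chi_S(G_t)\ge 5$, not the equality.
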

\begin{proof}
Let $S = (1,2,2,2,2,\ldots)$ be a sequence. The lower bound, $\chi_S(G_t) \ge 5$, is easy to see. Let $f$ be an $S$-packing $k$-coloring of $G_t$, and let
$x\in \ZZ$ have $f(x)=1$. Since $G_t$ is $4$-regular, $x$ has four neighbors in $G_t$, which are pairwise at distance $2$. Since none of them can be colored by $1$, their colors must be pairwise distinct, that is, $f(N(x))=\{2,3,4,5\}$. Thus, $\chi_S(G_t) \ge 5$.

Next, prove that $\chi_S(G_t) \le 5$ by forming an
$S$-packing 5-coloring of $G_t$. We consider three cases.
		
{\bf Case 1.} $t=4k+1$ for $k\in \NN$. \\
In the case when $k=1$, we color the consecutive vertices of $\ZZ$ using the following pattern of colors: $$11221331144155.$$ It is clear that any two distinct vertices of $G_5$, colored by $1$, are non-adjacent. Further, let $a,b \in V(G_5)$, where $a>b$, be both colored by $i \in \{2,3,4,5\}$. Since the length of the above described sequence is $14$, we infer that $a$ and $b$ are at distance at least $3$. Therefore, the described coloring is an $S$-packing $5$-coloring of $G_t$.

Next, let $k \geq 2$. Note that a graph $G_t$ can be presented using two  infinite spirals and $t$ lines; see~\cite{ekstein-2012,ekstein-2014}. Namely, two two-way infinite spirals are drawn in parallel and then $t$ lines are added in such a way that each of them is orthogonal to both of the spirals. We denote the lines by $l_0, l_1, \ldots, l_{t-1}$ and the set of intersections between each line $l_i$, $i \in \{0, 1, \ldots, t-1\}$, and the spirals by $L_i=\{2i+jt$, $j \in \ZZ\}$. Note that $L_0=\{\ldots, -2t, -t, 0, t, 2t,\dots\}$ and $L_{t-1} = \{\ldots, -2, t-2, 2t-2, 3t-2, 4t-2, \dots\}$. See Figure~\ref{fig:v1v8}.

Now, we form the coloring $f$ of $G_t$ as follows. We color all vertices from $L_0$ one after another starting with the vertex $0$ using the following pattern of colors: $2345$. Next, for all $i \in \{2, 4, \ldots, t-1\}$, let $f(2i+jt)=1$ for all odd integers $j$, and for all $i \in \{1, 3, \ldots, t-2\}$ let $f(2i+jt)=1$ for all even integers $j$. It is clear that any two distinct vertices of $G_t$ both colored by $1$ are non-adjacent.

Further, we partition the set of still uncolored vertices of $G_t$ into $8$ subsets as follows: \\
$V_1= \{kt+2i;~ i \equiv 1\pmod 4, k \equiv 1\pmod 4\};\\
V_2 = \{kt+2i; ~i \equiv 1\pmod 4, k \equiv 3\pmod 4\};\\
V_3 = \{kt+2i; ~i \equiv 2\pmod 4, k \equiv 2\pmod 4\};\\
V_4 = \{kt+2i; ~i \equiv 2\pmod 4, k \equiv 0\pmod 4\};\\
V_5 = \{kt+2i; ~i \equiv 3\pmod 4, k \equiv 1\pmod 4\};\\
V_6 = \{kt+2i; ~i \equiv 3\pmod 4, k \equiv 3\pmod 4\};\\
V_7 = \{kt+2i; ~i \equiv 0\pmod 4, k \equiv 2\pmod 4\};\\
V_8 = \{kt+2i; ~i \equiv 0\pmod 4, k \equiv 0\pmod 4\}.$\\
Clearly, the sets $V_1, V_2, \ldots, V_8$ are disjoint and $V_1 \cup V_2 \cup \ldots \cup V_8$ presents the union of all uncolored vertices of $G_t$. Now, color all vertices from  $V_1\cup V_6$ with color $5$, all vertices from $V_2\cup V_5$ with color $3$, all vertices from $V_3\cup V_8$ with color $2$ and all vertices from $V_4\cup V_7$ with color $4$ (see Figure \ref{fig:v1v8}).

\begin{figure}
\centering
\scalebox{0.75}{
\begin{tikzpicture}
\draw (-30:9.3) node [text=black]{$\ell_{5}$}; 
\draw  (-70:9) node [text=black]{$\ell_{4}$}; 
\draw (-110:8.8) node [text=black]{$\ell_3$}; 
\draw  (-150:8.6) node [text=black]{$\ell_2$}; 
\draw (-190:8.4) node [text=black]{$\ell_1$}; 
\draw  (-230:8.2)node [text=black]{$\ell_0$}; 
\draw  (-270:9.8) node [text=black]{$\ell_{t-1}$}; 
\draw (-310:9.6) node [text=black]{$\ell_{t-2}$}; 
%l_{t-2}
\node[mark size=3pt,color=black] at (-303:1.8302){\pgfuseplotmark{*}};	
\node[mark size=3pt,color=black] at (-305.6:2.8102){\pgfuseplotmark{*}};	
\node[mark size=3pt,color=black] at (-307:3.802){\pgfuseplotmark{*}};	
\node[mark size=3pt,color=black] at (-308.7:4.802){\pgfuseplotmark{*}};	
\node[mark size=3pt,color=black] at (-309.2:5.802){\pgfuseplotmark{*}};	
\node[mark size=3pt,color=black] at (-309.6:6.802){\pgfuseplotmark{*}};	
\node[mark size=3pt,color=black] at (-309.7:7.802){\pgfuseplotmark{*}};	
\node[mark size=3pt,color=black] at (-309.9:8.802){\pgfuseplotmark{*}};	
\draw (-310:2.02) node [text=black]{$1$}; 
\draw  (-310.6:3.02) node [text=black]{$5$}; 
\draw (-310.3:4.02) node [text=black]{$1$}; 
\draw (-310.8:5.02) node [text=black]{$3$}; 
\draw (-311.1:6.02) node [text=black]{$1$}; 
\draw (-311.1:7.02) node [text=black]{$5$}; 
\draw (-311.2:8.02) node [text=black]{$1$}; 
\draw (-311.4:9.02) node [text=black]{$3$}; 	
%l_{t-1}
\node[mark size=3pt,color=black] at (-263.5:2.03){\pgfuseplotmark{*}};	 
\node[mark size=3pt,color=black] at (-266.5:3.03){\pgfuseplotmark{*}};	 
\node[mark size=3pt,color=black] at (-267.5:4.03){\pgfuseplotmark{*}};	 
\node[mark size=3pt,color=black] at (-268.5:5){\pgfuseplotmark{*}};	 
\node[mark size=3pt,color=black] at (-269:6){\pgfuseplotmark{*}};	 
\node[mark size=3pt,color=black] at (-269.5:7){\pgfuseplotmark{*}};	 
\node[mark size=3pt,color=black] at (-269.8:8){\pgfuseplotmark{*}};	 
\node[mark size=3pt,color=black] at (-270:9){\pgfuseplotmark{*}};	
\draw (-268.5:2.3) node [text=black]{$4$}; 
\draw  (-269.5:3.3) node [text=black]{$1$}; 
\draw (-270.5:4.3) node [text=black]{$2$}; 
\draw (-271:5.3) node [text=black]{$1$}; 
\draw (-271:6.3) node [text=black]{$4$}; 
\draw (-271:7.3) node [text=black]{$1$}; 
\draw (-271:8.3) node [text=black]{$2$}; 
\draw (-271:9.3) node [text=black]{$1$}; 

\draw (-255.5:2.3) node [text=black]{$_{(-2)}$}; 
\draw  (-256.9:3.3) node [text=black]{$_{(-2+t)}$}; 
\draw (-259.3:4.3) node [text=black]{$_{(-2+2t)}$}; 
\draw (-262.3:5.3) node [text=black]{$_{(-2+3t)}$}; 
\draw (-263.6:6.3) node [text=black]{$_{(-2+4t)}$}; 
\draw (-264.65:7.3) node [text=black]{$_{(-2+5t)}$}; 
\draw (-265.6:8.3) node [text=black]{$_{(-2+6t)}$}; 
\draw (-266.3:9.27) node [text=black]{$_{(-2+7t)}$}; 
%l_0
\node[mark size=3pt,color=black] at (-226.2:2.2383){\pgfuseplotmark{*}};	 			 
\node[mark size=3pt,color=black] at (-227.4:3.2383){\pgfuseplotmark{*}};	 	 
\node[mark size=3pt,color=black] at (-228.6:4.2383){\pgfuseplotmark{*}};	 
\node[mark size=3pt,color=black] at (-229.2:5.2383){\pgfuseplotmark{*}};	
\node[mark size=3pt,color=black] at (-229.4:6.2383){\pgfuseplotmark{*}};	
\node[mark size=3pt,color=black] at (-229.9:7.2383){\pgfuseplotmark{*}};		
\draw (-231.9:2.483) node [text=black]{$2$}; 
\draw  (-231.5:3.483) node [text=black]{$3$}; 
\draw (-231.5:4.483) node [text=black]{$4$}; 
\draw (-231.3:5.483) node [text=black]{$5$}; 
\draw (-231.4:6.483) node [text=black]{$2$}; 
\draw (-231.7:7.473) node [text=black]{$3$}; 

\draw (-221.7:2.493) node [text=black]{$_{(0)}$}; 
\draw  (-224.9:3.493) node [text=black]{$_{(t)}$}; 
\draw (-225.3:4.498) node [text=black]{$_{(2t)}$}; 
\draw (-226.6:5.493) node [text=black]{$_{(3t)}$}; 
\draw (-227.1:6.493) node [text=black]{$_{(4t)}$}; 
\draw (-227.9:7.493) node [text=black]{$_{(5t)}$}; 
%l_1 		 
\node[mark size=3pt,color=black] at (-186.2:2.4599){\pgfuseplotmark{*}};		  		  		 
\node[mark size=3pt,color=black] at (-187.4:3.4599){\pgfuseplotmark{*}};	
\node[mark size=3pt,color=black] at (-188.5:4.4599){\pgfuseplotmark{*}};	
\node[mark size=3pt,color=black] at (-189:5.4599){\pgfuseplotmark{*}};	
\node[mark size=3pt,color=black] at (-189.5:6.4529){\pgfuseplotmark{*}};	
\node[mark size=3pt,color=black] at (-189.9:7.4529){\pgfuseplotmark{*}};		
\draw  (-191:2.699) node [text=black]{$1$}; 
\draw  (-191.1:3.639) node [text=black]{$5$}; 
\draw (-191.1:4.699) node [text=black]{$1$}; 
\draw (-191.5:5.699) node [text=black]{$3$}; 
\draw (-191.7:6.69) node [text=black]{$1$}; 
\draw (-191.8:7.69) node [text=black]{$5$}; 	

\draw  (-181:2.699) node [text=black]{$_{(2)}$}; 
\draw  (-183.8:3.839) node [text=black]{$_{(t+2)}$}; 
\draw (-185.1:4.899) node [text=black]{$_{(2t+2)}$}; 
\draw (-186.5:5.910) node [text=black]{$_{(3t+2)}$}; 
\draw (-187.7:6.92) node [text=black]{$_{(4t+2)}$}; 
\draw (-187.8:7.89) node [text=black]{$_{(5t+2)}$}; 	
%l_2		  		  		 
\node[mark size=3pt,color=black] at (-146.2:2.6992){\pgfuseplotmark{*}};	 
\node[mark size=3pt,color=black] at (-147.4:3.6792){\pgfuseplotmark{*}};	
\node[mark size=3pt,color=black] at (-148.5:4.6792){\pgfuseplotmark{*}};	
\node[mark size=3pt,color=black] at (-149:5.6732){\pgfuseplotmark{*}};	
\node[mark size=3pt,color=black] at (-149.5:6.6702){\pgfuseplotmark{*}};	
\node[mark size=3pt,color=black] at (-149.9:7.6702){\pgfuseplotmark{*}};		
\draw  (-151:2.8992) node [text=black]{$4$}; 
\draw  (-150.9:3.8992) node [text=black]{$1$}; 
\draw (-151:4.892) node [text=black]{$2$}; 
\draw (-151.1:5.892) node [text=black]{$1$}; 
\draw (-151.3:6.892) node [text=black]{$4$}; 
\draw (-151.5:7.892) node [text=black]{$1$}; 	
%\draw  (-140.8:2.492) node [text=black]{$_{(2)}$}; 
%\draw  (-141.4:3.291392) node [text=black]{$_{(t+2)}$}; 
%\draw  (-143.3:4.23992) node [text=black]{$_{(2t+2)}$}; 
%\draw (-145:5.21292) node [text=black]{$_{(3t+2)}$}; 
%\draw (-146.1:6.21292) node [text=black]{$_{(4t+2)}$}; 	
%l_3
\node[mark size=3pt,color=black] at (-106.2:2.9142){\pgfuseplotmark{*}};	
\node[mark size=3pt,color=black] at (-107.4:3.9042){\pgfuseplotmark{*}};	
\node[mark size=3pt,color=black] at (-108.5:4.9042){\pgfuseplotmark{*}};	
\node[mark size=3pt,color=black] at (-109:5.9042){\pgfuseplotmark{*}};	
\node[mark size=3pt,color=black] at (-109.5:6.9042){\pgfuseplotmark{*}};	
\node[mark size=3pt,color=black] at (-109.9:7.9042){\pgfuseplotmark{*}};	
\draw  (-101.2:2.7142) node [text=black]{$1$}; 
\draw  (-103.8:3.7042) node [text=black]{$3$}; 
\draw (-105.6:4.65042) node [text=black]{$1$}; 
\draw (-106.5:5.65042) node [text=black]{$5$}; 
\draw (-107.6:6.65042)node [text=black]{$1$}; 
\draw (-108.15:7.65042) node [text=black]{$3$}; 
%\draw  (-110:2.65142) node [text=black]{$_{(0)}$}; 
%\draw  (-110.5:3.65042) node [text=black]{$_{(t)}$}; 
%\draw (-111.6:4.65042) node [text=black]{$_{(2t)}$}; 
%\draw (-111.5:5.65042) node [text=black]{$_{(3t)}$}; 
%\draw (-111.6:6.65042)node [text=black]{$_{(4t)}$}; 
%\draw (-111.6:7.65042) node [text=black]{$_{(5t)}$}; 
%l_4
\node[mark size=3pt,color=black] at (-58.8:1.1802){\pgfuseplotmark{*}};		  		  		 
\node[mark size=3pt,color=black] at (-64.3:2.1402){\pgfuseplotmark{*}};	
\node[mark size=3pt,color=black] at (-66.8:3.1102){\pgfuseplotmark{*}};	
\node[mark size=3pt,color=black] at (-68.25:4.1102){\pgfuseplotmark{*}};	
\node[mark size=3pt,color=black] at (-68.75:5.102){\pgfuseplotmark{*}};	
\node[mark size=3pt,color=black] at (-69.4:6.102){\pgfuseplotmark{*}};	
\node[mark size=3pt,color=black] at (-69.6:7.102){\pgfuseplotmark{*}};	
\node[mark size=3pt,color=black] at (-69.9:8.102){\pgfuseplotmark{*}};	
\draw  (-53:1.4402) node [text=black]{$4$}; 
\draw  (-61:2.4402) node [text=black]{$1$}; 
\draw (-64:3.4102) node [text=black]{$2$}; 
\draw (-65.7:4.41022) node [text=black]{$1$}; 
\draw (-66.8:5.402)node [text=black]{$4$}; 
\draw (-67.2:6.402) node [text=black]{$1$}; 
\draw (-68.2:7.402)node [text=black]{$2$}; 
\draw (-68.3:8.402) node [text=black]{$1$}; 
%\draw  (-97:0.8272) node [text=black]{$_{(-2t-2)}$}; 
%\draw  (-81:1.8272) node [text=black]{$_{(-t-2)}$}; 
%\draw (-73.4:2.872) node [text=black]{$_{(-2)}$}; 
%\draw (-75:3.7722) node [text=black]{$_{(t-2)}$}; 
%\draw (-75:4.7702)node [text=black]{$_{(2t-2)}$}; 
%\draw (-74.3:5.772) node [text=black]{$_{(3t-2)}$}; 
%\draw (-74.:6.77)node [text=black]{$_{(4t-2)}$}; 
%l_5
\node[mark size=3pt,color=black] at (-22.4:1.3802){\pgfuseplotmark{*}};	
\node[mark size=3pt,color=black] at (-26:2.3502){\pgfuseplotmark{*}};	
\node[mark size=3pt,color=black] at (-27.5:3.3502){\pgfuseplotmark{*}};	
\node[mark size=3pt,color=black] at (-28.3:4.3502){\pgfuseplotmark{*}};	
\node[mark size=3pt,color=black] at (-28.9:5.3502){\pgfuseplotmark{*}};	
\node[mark size=3pt,color=black] at (-29.4:6.3502){\pgfuseplotmark{*}};	
\node[mark size=3pt,color=black] at (-29.6:7.3502){\pgfuseplotmark{*}};	
\node[mark size=3pt,color=black] at (-29.9:8.3502){\pgfuseplotmark{*}};	
\draw  (-17:1.6802) node [text=black]{$1$}; 
\draw  (-22:2.6502) node [text=black]{$3$}; 
\draw (-25:3.650) node [text=black]{$1$}; 
\draw (-26:4.6502) node [text=black]{$5$}; 
\draw (-27:5.6502)node [text=black]{$1$}; 
\draw (-27.5:6.6502) node [text=black]{$3$}; 
\draw (-28:7.6502)node [text=black]{$1$}; 
\draw (-28.3:8.6502) node [text=black]{$5$}; 
\bonusspiral [dashed] [black](0,0)(90:270)(-1:-8)[3];
\bonusspiral [dashed] [black](0,0)(90:270)(-2:-9)[3];
\bonusspiral[black](0,0)(90:130)(-1:-1.4422)[0.1111];
\bonusspiral[black](0,0)(90:130)(-2:-2.4422)[0.1111];
\bonusspiral[black](0,0)(-150:130)(-1.67:-3.4422)[0.1111];
\bonusspiral[black](0,0)(-150:130)(-2.67:-4.4422)[0.1111];
\bonusspiral[black](0,0)(-150:130)(-3.67:-5.4422)[0.1111];
\bonusspiral[black](0,0)(-150:130)(-4.67:-6.4422)[0.1111];
\bonusspiral[black](0,0)(-150:130)(-5.67:-7.4422)[0.1111];
\bonusspiral[black](0,0)(-150:130)(-6.67:-8.4422)[0.1111];
\bonusspiral[black](0,0)(210:240)(-7.67:-8.05)[0.1111];
\bonusspiral[black](0,0)(210:240)(-8.67:-9.05)[0.1111];
\draw (-20:1) -- (-30:8.8);
\draw (-50:0.7) -- (-70:8.6);
\draw (-105:2.4) -- (-110:8.4);
\draw (-145:2.2) -- (-150:8.2);
\draw (-185:2.0) -- (-190:8);
\draw (-225:1.8) -- (-230:7.8);
\draw (-262:1.6) -- (-270:9.4);
\draw (-300:1.4) -- (-310:9.2);
\end{tikzpicture}
}
\caption[]{Representation of $G_t$ with two disjoint infinite spirals and $t$ lines. Vertices of $\ZZ$ are in small size surrounded by brackets, while colors are shown in normal size and are as in Case 1 of the proof of Theorem \ref{t122}.}
\label{fig:v1v8}
\end{figure}
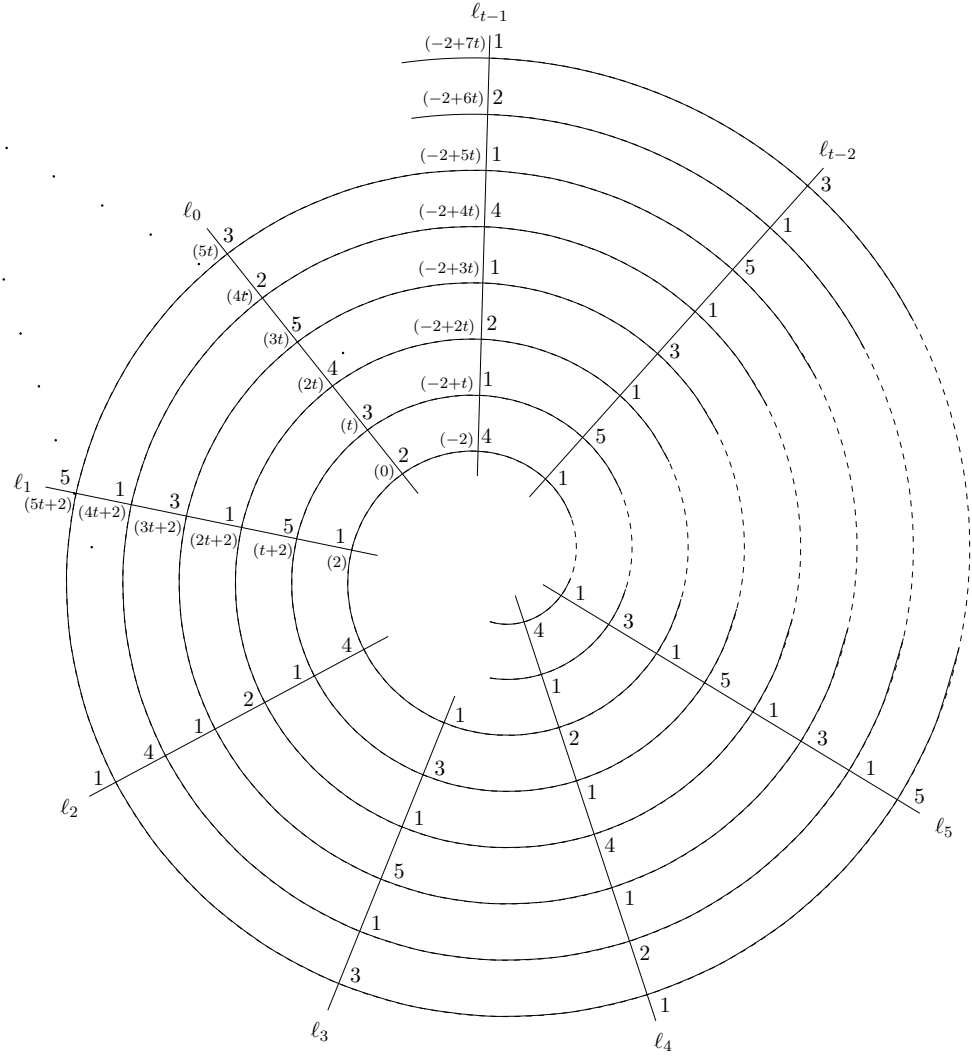

It remains to prove that any two vertices $a, b \in V(G_t)$, colored with the same color $s \in \{2,3,4,5\}$, are at distance at least $3$. Clearly, if $a, b \in L_i$ for some $i \in \{0, 1, \ldots, t-1\}$, then they are at distance at least $4$. Hence, suppose that $a \in L_m$ and $b \in L_n$, where $0 \leq m < n \leq t-1$. If $(n-m)>2 \pmod t$, then $d_{G_t}(a,b) \geq 3$. 
If $n-m =1\pmod t$, then we have two cases: $a\in L_0, b\in L_1$ and $a \in L_0,b\in L_{t-1}$. In either case we can write $a=j_mt$ and $b=\pm 2+j_nt$, $j_m, j_n \in \ZZ$. Since $f(a)=f(b)$, we have $|j_m-j_n| \geq 2$ and hence $d_{G_t}(a,b) \geq 3$. 
Finally, suppose that $n-m=2 \pmod t$. Let $a=2m+j_mt$ and $b=2n+j_nt$, $j_m, j_n \in \ZZ$. Since $f(a)=f(b)$, we have that $|j_m - j_n| \geq 2$ and thus $d_{G_t}(a,b) \geq 4$. Therefore, $f$ is a $S$-packing $5$-coloring of $G_t$.
%
%First, if $m, n \neq 0$, then from $n-m=2$ we derive that one of the vertices from $\{a,b\}$ belongs to $V_1$ or $V_2$ (resp., $V_3$ or $V_4$) and the other to $V_6$ or $V_5$ (resp., $V_8$ or $V_7$). Next, since $f(a)=f(b)$, we have that $|j_m - j_n| \geq 2$ and thus $d_{G_t}(a,b) \geq 4$. Next, if $m=0$, then $|j_m - j_n| \geq 2$ which again implies that $d_{G_t}(a,b) \geq 4$. This completes the proof. 

{\bf Case 2.} $t=4k-1$ for $k\in \NN$, and $3\nmid t$. \\
In this case, color the consecutive vertices of $G_t$ one after another using the following pattern of colors:
$$123145.$$
Note that for any two vertices $a, b \in V(G_t)$, both colored by $1$, we have $a-b=3m$, $m \in \ZZ$. This implies that $a$ and $b$ are not adjacent in $G_t$. Next, if any two vertices $c, d \in V(G_t)$ are both colored by $s \in \{2,3,4,5\}$, then $c-d =6i$, $i \in \ZZ$. Clearly, $c-d \notin \{2, 4, t\}$, and $c-d \notin \{t+2, t-2\}$, since $t+2$ and $t-2$ are odd integers, but $c-d$ is even. Moreover, $c-d=6i \neq 2t$, since $3 \nmid t$. Therefore, the distance between $c$ and $d$ in $G_t$ is greater than $2$, which implies that the described coloring is an $S$-packing $5$-coloring of $G_t$ and hence $\chi_S(G_t) \leq 5$.

\medskip

{\bf Case 3.} $t=4k-1$ for $k\in \NN$, $k \geq 2$, and $3 | t$. \\
Let $k \geq 2$ be an arbitrary positive integer and define the coloring $f$ of the vertices of $G_t$ as folows. 
Let $f(j(4k-3))=1$ and $f(j(4k-3)+\ell)=1$ for every $j \in \ZZ$, $\ell \equiv 1 \pmod 3$, $1 \leq \ell \leq 4k-4$ (actually,  $1 \leq \ell \leq 4k-6$). 
Further,  for every $m \in \ZZ$, $m \equiv 2,3 \pmod 6$, $1 \leq m \leq 4k-4$ (since $4k-4$ is even and divisible by $3$, we have $2 \leq m \leq 4k-7$), let  $f(j(4k-3)+m)=2$ if $2|j$, and otherwise $f(j(4k-3)+m)=4$. Finally, for every $p \in \ZZ$, $p \equiv 0,5 \pmod 6$, $1 \leq p \leq 4k-4$ ($5 \leq p \leq 4k-4$), let  $f(j(4k-3)+p)=3$ if $2|j$, and otherwise $f(j(4k-3)+p)=5$. 
In this way, the consecutive vertices of $\ZZ$ are colored with the following pattern of colors: $$1(122133)^{v}1(144155)^v,$$ where $v= \frac{2(k-1)}{3}$. 

Now, we prove that $f$ is an $S$-packing $5$-coloring of $G_t$. 
First, consider a vertex $a \in V(G_t)$, colored by $1$.  If $a=j(4k-3)$ for some $j \in \ZZ$, then its neighbours are $j(4k-3)+2$, $j(4k-3)-2$, $j(4k-3)+t$ and $j(4k-3)-t$. Clearly, $f(j(4k-3)+2) \neq 1$ and $f(j(4k-3)-2) \neq 1$. Using the fact that each sequence $(122133)^v$ (respectively, $(144155)^v$)  contains $t-3$ integers, we derive that $f(j(4k-3)+t) \neq 1$ and $f(j(4k-3)-t)) \neq 1$. Next, suppose that $a=j(4k-3)+\ell$ for some $j \in \ZZ$, $\ell \equiv 1 \pmod 3$, $1 \leq \ell \leq 4k-4$. Again, it is clear that $a+2$ and $a-2$ do not receive color $1$ by $f$. Since $a+t=(j+1)(4k-3)+(\ell+2)$ and $\ell+2 \equiv 0 \pmod 3$, we derive that $f(a+t) \neq 1$. Analogously, $a-t=(j-1)(4k-3)+(\ell-2)$ and $l-2 \equiv 2 \pmod 3$, hence $f(a-t) \neq 1$. These findings imply that any two vertices of $G_t$, both colored by $1$, are at distance at least $2$.

Next, let $b \in V(G_t)$ such that $f(b) \in \{2,3,4,5\}$. Clearly, $f(b-2) \neq f(b)$, $f(b+2) \neq f(b)$, $f(b-4) \neq f(b)$ and $f(b+4) \neq f(b)$. Next, since each sequence $(122133)^v$ (respectively, $(144155)^v$)  contains $t-3$ integers, we have $f(b \pm (t-2)) \neq f(b)$ and $f(b \pm t) \neq f(b)$. Further, since $f(b) \in \{2,3,4,5\}$, $b=j(4k-3)+x$ for some positive integers $j, x$. Hence, $b+2t=(j+2)(4k-3)+x+4$, which implies that $f(b+2t) \neq f(b)$. Analogously, $b-2t=(j-2)(4k-3)+x-4$, $b+(t+2)=(j+1)(4k-3)+x+4$ and $b-(t+2)=(j-1)(4k-3)+x-4$. Thus, $f(b - 2t) \neq f(b)$ and $f(b \pm (2+t)) \neq f(b)$, which implies that $f$ is an $S$-packing $5$-coloring of $G_t$ and the proof is done.
 \qed
\end{proof}
		
\section{Distance coloring of graphs $G_t$}
\label{sec:dist}
		A distance coloring relative to distance $d$ of a graph $G$ is a mapping $V(G)\rightarrow \{1,2,3,\dots\}$ such that any two distinct vertices $a,b\in V(G)$ with $f(a)=f(b)$ are at distance greater than $d$ in $G$.
Note that for the sequence $S = (d,d,d,\dots)$, an $S$-packing coloring presents the distance coloring relative to distance $d$, that is, the $d$-distance coloring.

\subsection{Lower bound for distance colorings of $G_t$}
In this section we present a lower bound for $\chi_S(G_t)$ with $S=(d,d,d,d,\dots)$. 
		
\begin{theorem} \label{lowdist}
If $t\ge 3$ is an odd integer, $S = (d,d,d,\dots)$, and $d\ge\frac{t+1}{2}$
then $$\chi_S(G_t) \ge 1+t\left(d-\frac{t-3}{2}\right).$$ 
\end{theorem}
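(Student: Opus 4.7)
The plan is to establish the lower bound by exhibiting a set of $1 + t(d - (t-3)/2)$ vertices of $G_t$ that are pairwise at distance at most $d$; any $(d,d,\ldots)$-packing coloring must then use pairwise distinct colors on them. The candidate set is the interval of consecutive integers
\[
C = \{0, 1, 2, \ldots, t(d - (t-3)/2)\}.
\]
Because $G_t$ is a Cayley graph on $\ZZ$ with connection set $\{\pm 2, \pm t\}$, distances are translation-invariant, so to show that $C$ forms a clique in the $d$-th distance power of $G_t$ it suffices to prove $d_{G_t}(0, w) \leq d$ for every integer $w$ with $1 \leq w \leq t(d - (t-3)/2)$.

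To bound $d_{G_t}(0, w)$ from above I would exhibit a walk from $0$ to $w$ composed of $|\alpha|$ unit steps of size $\pm 2$ and $|\beta|$ unit steps of size $\pm t$, so that $w = 2\alpha + t\beta$ and the walk has length $|\alpha| + |\beta|$; since $t$ is odd, such an integer $\alpha$ exists precisely when $\beta \equiv w \pmod{2}$. Writing $w = kt + s$ with $k \geq 0$ and $1 \leq s \leq t$, the upper bound on $w$ forces $k \leq d - (t-1)/2$. Three cases then cover everything. If $s$ is even, I take $\beta = k$ and $\alpha = s/2$, giving length $k + s/2 \leq k + (t-1)/2 \leq d$. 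If $s$ is odd with $s \geq 3$, I take $\beta = k + 1$ and $\alpha = (s-t)/2$, giving length $k + 1 + (t-s)/2 \leq k + (t-1)/2 \leq d$. Finally, if $s = 1$ and $k \geq 1$, I take $\beta = k - 1$ and $\alpha = (t+1)/2$, giving length $k - 1 + (t+1)/2 = k + (t-1)/2 \leq d$; while if $s = 1$ and $k = 0$ (that is, $w = 1$), the choice $\beta = 1$, $\alpha = (1-t)/2$ produces a walk of length $(t+1)/2 \leq d$ by the standing hypothesis on $d$.

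The essential obstacle is the parity restriction induced by $t$ being odd: the ``natural'' choice $\beta = \lfloor w/t \rfloor$ often has the wrong parity, so $\beta$ must be shifted by $\pm 1$. This shift is cheap in most cases, but becomes tight exactly at $s = 1$, where shifting upward to $\beta = k + 1$ would cost length $k + 1 + (t-1)/2 = d + 1$, so one must shift downward to $\beta = k - 1$ instead (or start from $\beta = 1$ when $k = 0$). This is precisely where the hypothesis $d \geq (t+1)/2$ is used, as witnessed by $w = 1$ requiring a walk of length exactly $(t+1)/2$. Once the representations above are written down, the inequalities $|\alpha| + |\beta| \leq d$ are immediate, and the clique $C$ witnesses the stated lower bound.
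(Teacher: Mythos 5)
Your proposal is correct and follows essentially the same strategy as the paper: both show that the $1+t\left(d-\frac{t-3}{2}\right)$ consecutive integers $0,1,\ldots,t\left(d-\frac{t-3}{2}\right)$ are pairwise at distance at most $d$ in $G_t$ by writing each target $w$ as $2\alpha+t\beta$ with $|\alpha|+|\beta|\le d$, exploiting the parity constraint forced by $t$ being odd. Your case analysis on $w=kt+s$ is in fact slightly more complete than the paper's, which works out the odd case in detail and leaves the even case to "a similar analysis."
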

\begin{proof}
We claim that every $1+t\left(d-\frac{t-3}{2}\right)$ consecutive integers in $\ZZ$ are at pairwise distance at most $d$ in $G_t$. It is clear that this claim implies the bound of the theorem. Without loss of generality consider the subsequence of consecutive integers starting with $0$. To prove this claim it suffices to show that all integers in the set $V=[t\left(d-\frac{t-3}{2}\right)]=\{1,\ldots, t(d-\frac{t-3}{2})\}$ are at distance at most $d$ from $0$ in $G_t$. Let $n = d-\frac{t-3}{2}$, so that we can write $V=[nt]$. 

Suppose $y\in [nt]$ is an odd integer. Let $i\in \NN_0$ such that $t(2i-1)< y\le t(2i+1)$. Note that $i\in \{0,\ldots, \lfloor\frac {n}{2}\rfloor\}$. If $y<2it+2$, then there exists $r\in [\frac{t+1}{2}]_0$ such that $y=(2i-1)t+2r$, and  
$$P:0,t,\ldots,(2i-2)t,(2i-1)t,(2i-1)t+2,\ldots,(2i-1)t+2r-2,y$$ 
is a path between $0$ and $y$ whose length is $2i-1+r$. Now, 
$$2i-1+r\le n-1+\frac{t+1}{2}=d-\frac{t-3}{2}-1+\frac{t+1}{2}=d+1,$$
hence $d_{G_t}(0,y)\le d$ unless $2i-1=n-1$ and $r=\frac{t+1}{2}$. However, if $y=(n-1)t+2\frac{t+1}{2}=nt+1$, then $y$ is not in $[nt]$. 

On the other hand, if $y> 2it+2$, then there exists $r\in [\frac{t-3}{2}]_0$ such that $y=(2i+1)t-2r$, and 
$$P:0,t,\ldots,2it,(2i+1)t,(2i+1)t-2,\ldots,(2i+1)t-(2r-2),y$$
is a path between $0$ and $y$ whose length is $2i+1+r$. Since $y\in[nt]$, we have $2i+1\le n$, and so   
$$2i+1+r\le n+\frac{t-3}{2}=d-\frac{t-3}{2}+\frac{t-3}{2}=d,$$
hence $d_{G_t}(0,y)\le d$.
%\Bostjan{What if $y=2it+2$?}
Note that if $y=2it+2$, then $y$ is an even integer.

If $y\in [nt]$ is an even integer, by a similar analysis as above one can show  $d_{G_t}(0,y)\le d$. As noted in the beginning of the proof, we derive that every $1+t\left(d-\frac{t-3}{2}\right)$ consecutive integers in $\ZZ$ are at distance at most $d$, which implies that they all get distinct colors. Therefore, $\chi_S(G_t) \ge 1+t\left(d-\frac{t-3}{2}\right).$
\qed
\end{proof}

%Note that the bound in Theorem~\ref{lowdist} is interesting only in the case when $d\ge\frac{t+1}{2}$.

\subsection{Upper bound for distance colorings of $G_t$}
 In this section we present an upper bound for $\chi_S(G_t)$ with $S=(d,d,d,\dots)$.
\begin{theorem}\label{updist}
If $t\ge 3$ is an odd integer and $S = (d,d,d,\dots)$, then
 		
$$\chi_S(G_t) \le \left\{ \begin{array}{lll}
 			\ 1+d(d+1)& ; & d\le \frac{t+1}{2}\\
 			\ {td+\frac{1}{4}(-t^2+2t+7)}&; &  d\ge \frac{t+1}{2} .
\end{array} \right.$$
 	\end{theorem}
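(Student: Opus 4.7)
The plan is to exhibit an explicit periodic $d$-distance coloring of $G_t$ in each regime. Validity is controlled by the ball $B_d(0)=\{2a+tb:a,b\in\ZZ,\ |a|+|b|\le d\}$: a residue coloring $f(v)=v\bmod N$ is valid exactly when no nonzero element of $B_d(0)$ is a multiple of $N$, and $B_d(0)$ decomposes as a union of arithmetic progressions $\{tb+2a:|a|\le d-|b|\}$ indexed by $|b|\le d$ (each progression consisting of integers with the parity of $b$).

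For Case~1 ($d\le (t+1)/2$, target $1+d(d+1)$ colors), the plan is to take some $N\le d^2+d+1$ with $B_d(0)\cap N\ZZ=\{0\}$ and use $f(v)=v\bmod N$. In this regime $B_d(0)$ is sparse: the main progression $\{\pm 2,\ldots,\pm 2d\}$ from $b=0$ together with the satellites at $\pm kt$ for small $k$ do not heavily overlap when $d\le (t-1)/2$, leaving enough gaps inside $[1, d^2+d+1]$ for a valid divisor-free residue. The boundary value $d=(t+1)/2$ coincides numerically with the Case~2 target and may be handled by the Case~2 construction.

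For Case~2 ($d\ge (t+1)/2$, target $N=td+2-(t-1)^2/4$), set $n=d-(t-3)/2$ and $s=(t-1)/2$. By the proof of Theorem~\ref{lowdist}, $[-nt,nt]\subseteq B_d(0)$, and the only remaining elements of $B_d(0)$ are outliers $tb+2a$ with $|b|>n$; a direct count shows there are exactly $s(s-1)$ such outliers per side, whence $N=nt+1+s(s-1)$. Because $d\ge (t+1)/2$ forces $2N>td$, the higher multiples $kN$ ($k\ge 2$) automatically lie outside $B_d(0)$, so validity of $f(v)=v\bmod N$ reduces to the single condition $N\notin B_d(0)$. When this condition holds the residue coloring succeeds directly; otherwise (which happens for particular residues $t\bmod 4$, e.g.\ $t=7$, where $N=t(d-1)\in B_d(0)$ via $(a,b)=(0,d-1)$) I would replace it by a period-$2N$ pair coloring whose colour class $i$ is $\{2i,2i+r\}+2N\ZZ$ for an odd offset $r$ satisfying $r,\,2N-r\notin B_d(0)$; existence of such $r$ is guaranteed by counting that the odd elements of $B_d(0)\cap(0,2N)$ (coming only from odd $|b|\le d$) forbid fewer than $N$ residues among the $N$ odd values in $(0,2N)$.

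The main obstacle is the case analysis in Case~2: distinguishing, according to the parity of $s$ (equivalently $t\bmod 4$) and the position of $d$ in $[(t+1)/2,\infty)$, whether the residue coloring suffices or the pair construction must be invoked, and in the latter case locating a valid offset $r$ by direct inspection of which Diophantine equations $2a+tb=r$ admit solutions with $|a|+|b|\le d$.
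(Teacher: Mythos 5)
Your route is genuinely different from the paper's: the paper proves both bounds with a greedy (first-fit) argument, colouring $\ZZ$ outward from $0$ in alternating blocks of length $t$ and simply bounding the number of already-coloured vertices within distance $d$ of the current one ($|V_1|+|V_2|+|V_3|=d(d+1)$ when $d\le\frac{t+1}{2}$, and $td-|V|-|W|$ in the other regime, where $V$ and $W$ are explicit sets of integers in $\{-1,\dots,-td\}$ shown to lie at distance greater than $d$ from $0$). No explicit pattern and no arithmetic condition on $t$ ever enters. Your plan instead demands a periodic colouring, and that is where it breaks. In Case 1 you need an $N\le d^2+d+1$ with $B_d(0)\cap N\ZZ=\{0\}$ and justify its existence only by ``sparsity''; but avoiding \emph{all} multiples of $N$ is far stronger than avoiding one element, and since $B_d(0)$ reaches up to $td\gg d^2$ when $t$ is large relative to $d$, many multiples of every candidate $N$ fall in the relevant range. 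Concretely, for $d=2$ and $t=177$ one has $B_2(0)\cap\ZZ_{>0}=\{2,4,175,177,179,354\}$, and every $N\in\{1,\dots,7\}$ divides one of these numbers ($3\mid 177$, $5\mid 175$, $6\mid 354$, $7\mid 175$); hence no residue colouring attains $1+d(d+1)=7$ colours there, even though the bound itself holds (by greedy, and indeed $\chi_2(G_{177})=6$ by Theorem~\ref{2dist}, via a non-residue pattern). Case 1 therefore cannot be completed along the proposed lines.

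Case 2 is closer to viable: the identity $N-1=|B_d(0)\cap\ZZ_{>0}|$ is correct, $2N>td$ does reduce validity of $v\mapsto v\bmod N$ to the single condition $N\notin B_d(0)$, and you rightly observe that this condition can fail. But the fallback pair colouring rests on the claim that the odd elements of $B_d(0)\cap(0,2N)$ forbid \emph{fewer than} $N$ of the $N$ odd offsets, and the crude bound (twice the number $O$ of odd positive elements of $B_d(0)$) does not deliver this: for $t=9$, $d=6$ one gets $N=40$, $O=20$, so $2O=N$ exactly, and a free offset exists only because some residues are forbidden twice. So even in Case 2 the existence of a valid $r$ needs a genuine argument about these overlaps (or about the parity split of $B_d(0)^{+}$), not a count asserted by inspection. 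The upshot is that your construction, if completed, would yield the stronger conclusion that explicit periodic patterns achieve the bound (useful for the circulant-graph corollaries in the concluding section), but as written it does not prove the theorem, whereas the paper's greedy argument does so uniformly and with no case analysis on $t$.
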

\begin{proof}
We start with $d\le \frac{t+1}{2}$. We use the greedy (first-fit) algorithm so that when a vertex $i$ is colored we use the smallest possible color that was not given to already colored vertex at distance at most $d$ from a given vertex. After coloring the first $t$ vertices (from $0$ to $t-1$), we then do the same (by using the decreasing order $i-1,i-2,\ldots$) for $t$ vertices left of $0$, and then continue in the same way by alternating the directions.  

The color of a vertex, say $0$, thus depends on the colors of vertices from the following sets: 
\begin{itemize}
	\item[$V_1=$] $\{-2,-4,-6,\dots, -2d\}$,
	\item [$V_2=$] $\{-t,-2t,-3t,\dots, -td\}$,
	\item [$V_3=$] $\{ -kt \pm 2\ell\,|\, k\in [d-1]$, $\ell\in [d-1]$ and $k+\ell \le d\}$.
\end{itemize}
Clearly, all integers in $V_1\cup V_2\cup V_3$ are left of $0$, and $|V_1| =|V_2| = d$ and $|V_3| = \sum\limits_{k=1}^{d-1}2(d-k) = d(d-1)$. Hence the color of $0$ depends on the colors of $d(d+1)$ vertices to the left of $0$. In the worst case these colors are distinct, and we still have one color available to color the vertex $0$. This holds for an arbitrary vertex, hence the bound $\chi_S(G_t) \le 1+d(d+1)$ follows. 

Now we prove $\chi_d(G_t) \le \ td+\frac{1}{4}(-t^2+2t+7)$ for $d\ge \frac{t+1}{2}$.
	Again we use the greedy (first-fit) algorithm by starting at an arbitrary vertex (say, $0\in \ZZ$) and color the vertices in the increasing order by using the smallest possible color that was not given to already colored vertex at distance at most $d$ from $x$. After coloring the first $t$ vertices, we then do the same by using the decreasing order for the vertices left of $0$, and then continue in the same way by alternating the directions. 
It is clear that the color of a vertex $0$ may depend only on the colors of vertices in $\{-1,-2,\ldots,-td\}$. Not all of these vertices are at distance at most $d$ from $0$, and we next determine which are not. 
	
Let	$V = \{-t(d-1)-2(x+1)\,|\,x\in\{1,\ldots, \frac{t+1}{2}-2\}\}$. It is easy to see that for a vertex $v\in V$, we have $d_{G_t}(0,v)>d$.	%Tedy neexistuje cesta délky nejvýše $d$ s počátečním vrcholem v $0$ a s koncovým vrcholem $v$.
%	Consider $2$ paths in $G_t$:
%	$$P: 0,-t,-2t,\dots,-dt=p$$ and $$Q: 0,-t,\dots, -(d-1)t,-(d-1)t-2=q.$$
%	Then every distinct path from $P,Q$ (two paths are distinct if their end vertices are distinct or their length is different) with length at most $d$ starts at vertex $0$ in $G_t$  only contains vertices bigger than $q$.
	%
%	Now we prove that $p<v$ a $v<q$ for each $v\in V$. Since $p= -dt, v =-t(d-1)-2(x+1)$ and $x \le \frac{t+1}{2}-2$, we get $p=-dt<-dt+1\le -t(d-1)-2(x+1)=v$. Second inequality is clear, since $v = -t(d-1)-2(x+1)< -t(d-1)-2 =q $.
%	Since $p<v<q$ for $\forall v\in V$, there does not exist a path with length at most $d$  starts at vertex $0$  and ends at vertex $v$ in graph $G_t$ and then the color of vertex $0$  does not depend on the colors of vertices $V$.
Note that $|V| = \frac{t+1}{2}-2$. 

Next, let $W=\{-t(d-y)+2(y+z)\,|\, y\in\{0,1,\ldots,\frac{t-3}{2}-1\}, z\in \{1,\ldots, t-3-2y\}\}$. We claim that $d_{G_t}(0,w)>d$ for any $w\in W$. 
Consider the following two types of paths: 
 $$R: 0,-t,\dots,-t(d-y),-t(d-y)+2,\dots, -t(d-y)+2y=r$$ and $$S = 0,-t,\dots,-t(d-y-2),-t(d-y-2)-2,\dots, -t(d-y-2)-2(y+2)=s.$$ 
	
Note that every (distinct) path from $R\cup S$ with length at most $d$ starting at vertex $0$ does not contain a vertex $u$ such that $r<u<s$ except a path $T$, where $$T: 0,-t,\dots,-t(d-y-1),-t(d-y-1)\pm2,\dots, -t(d-y-1)\pm 2(j+1),j\le y.$$ But then the vertex $t\in T$, where $r<t<s$, are odd and vertices of $W$ are even, or vice versa (depending on $y$). Hence a path $T$ does not contain any vertex of $W$.

Now we prove that $r<w$ and $w<s$ for every $w\in W$. Since $r= -t(d-y)+2y$  and $w = -t(d-y)+2y+2z$ reduces to $0< 2z$, the first inequality is correct. In the proof of the second inequality, from $w = -t(d-y)+2y+2z\le  -t(d-y)+2y+2(t-3-2y)$, we get $w\le (-td+ty+2t-2y-4)-2<-td+ty+2t-2y-4=s.$
Since $r<w<s$ for all $w\in W$, there does not exist a path of length at most $d$ between vertices $0$ and $w$ in $G_t$.  Hence the color of $0$ does not depend on the colors of vertices in $W$. Note that $|W| = \sum\limits_{y=0}^{\frac{t-3}{2}-1}2(\frac{t-3}{2}-y)=  \frac{t-3}{2}(\frac{t-3}{2}+1)$. \\
	
We infer that the color of the vertex $0$ depends on the colors of at most  $dt-|V|-|W|$ vertices, and $dt-|V|-|W| = dt - (\frac{t+1}{2}-2) - \frac{t-3}{2}(\frac{t-3}{2}+1) = td + \frac{1}{4}(-t^2+2t+3)$.
	In the worst case these colors are distinct, and we still have one color available to color the vertex $0$, and $td + \frac{1}{4}(-t^2+2t+3)+1 = td + \frac{1}{4}(-t^2+2t+7)$, which equals the announced bound.
\qed
\end{proof}

%From the lower bound in Theorem \ref{lowdist} and the upper bound in Theorem \ref{updist}) of the $d$-distance chromatic number of $G_t$, we obtain that the  difference of the bounds are $(\frac{t-2}{2})^2-\frac{1}{4}$.

\subsection{Some exact values of the $d$-distance chromatic numbers of $G_t$}
In this subsection, we prove the exact values of $\chi_d(G_t)$ for $d\ge t-3$. 

If $t=3$, then Theorems~\ref{lowdist} and~\ref{updist} yield the exact value of $\chi_d(G_3)$. 
\begin{corollary}\label{col}
	If $d\ge 2$ is an integer, then $\chi_d(G_3) = 3d+1$. 
\end{corollary}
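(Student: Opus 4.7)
The plan is to obtain the corollary as an immediate specialization of Theorems~\ref{lowdist} and~\ref{updist} to the case $t=3$, verifying that the two bounds coincide and equal $3d+1$ for every $d\ge 2$.

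For the lower bound, I would observe that with $t=3$ the condition $d\ge (t+1)/2$ of Theorem~\ref{lowdist} becomes $d\ge 2$, which matches the hypothesis of the corollary. Substituting $t=3$ into the expression $1+t\bigl(d-\tfrac{t-3}{2}\bigr)$ gives $1+3(d-0)=3d+1$. Thus Theorem~\ref{lowdist} directly yields $\chi_d(G_3)\ge 3d+1$.

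For the upper bound, I would use Theorem~\ref{updist}. Since $(t+1)/2=2$ when $t=3$, the value $d=2$ sits on the boundary where both branches of the upper bound apply, and every $d\ge 3$ falls into the second branch. I would check that both evaluate to $3d+1$: the first branch gives $1+d(d+1)=1+2\cdot 3=7=3\cdot 2+1$ at $d=2$, and the second branch gives $td+\tfrac{1}{4}(-t^2+2t+7)=3d+\tfrac{1}{4}(-9+6+7)=3d+1$ for every $d\ge 2$. Hence $\chi_d(G_3)\le 3d+1$ for all $d\ge 2$.

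Combining the two inequalities yields the claimed equality. The only possible obstacle is the purely arithmetic one of confirming that the two branches of Theorem~\ref{updist} agree at the boundary $d=(t+1)/2$ when $t=3$, which the above computation settles; no further argument is needed.
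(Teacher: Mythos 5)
Your proposal is correct and is exactly the paper's intended argument: the corollary is stated there as an immediate consequence of Theorems~\ref{lowdist} and~\ref{updist} at $t=3$, and your arithmetic checks (both bounds equal $3d+1$ for $d\ge 2$, with the two branches of the upper bound agreeing at $d=2$) are accurate.
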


\begin{theorem}\label{exactd}
If $t\ge 5$ is an odd integer  and $d\ge t-3$, then $$\chi_d(G_t) = 1+t\left(d-\frac{t-3}{2}\right).$$
\end{theorem}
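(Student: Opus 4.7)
My plan is to prove the two bounds separately, with essentially all of the work going into the upper bound. For the lower bound, the assumption $d\ge t-3$ already implies $d\ge\frac{t+1}{2}$ whenever $t\ge 7$ (since $t-3\ge\frac{t+1}{2}\Leftrightarrow t\ge 7$), so Theorem~\ref{lowdist} directly gives $\chi_d(G_t)\ge 1+t(d-\tfrac{t-3}{2})$. For $t=5$, the same theorem handles $d\ge 3$, while the remaining boundary case $t=5$, $d=2$ coincides with the value of $\chi_2(G_5)$ established in the earlier subsection on $2$-distance chromatic numbers.

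For the upper bound, write $n=d-\frac{t-3}{2}$ and $N=1+tn$; note that $d+1=n+\frac{t-1}{2}$ and, since $t\ge 5$ and $d\ge t-3$, we have $n\ge\frac{t-3}{2}\ge 1$. The proposed $d$-distance coloring is the periodic one, $f\colon\ZZ\to\{1,\ldots,N\}$ defined by $f(v)\equiv v\pmod N$. Vertex-transitivity of $G_t$ reduces its validity to the single claim
\[ d_{G_t}(0,kN)\ge d+1 \quad\text{for every } k\ge 1. \]
The main ingredient is the standard description
\[ d_{G_t}(0,v)=\min\{|p|+|q|\,:\,p,q\in\ZZ,\ 2p+tq=v\}, \]
obtained by separating the $\pm 2$-edges from the $\pm t$-edges in any walk, combined with the crude estimate $|p|+|q|\ge |2p+tq|/t$ (which uses $t\ge 2$).

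For $k\ge 2$ the proof is short. When $2\le k\le t-1$, integrality of $|p|+|q|$ together with $0<k/t<1$ promotes the crude bound to $|p|+|q|\ge kn+1$; the hypothesis $n\ge\frac{t-3}{2}$ yields $2n\ge d$, hence $kn+1\ge 2n+1\ge d+1$. When $k\ge t$, the crude bound already gives $|p|+|q|\ge N$, and the identity $N-(d+1)=(t-1)(n-\tfrac12)>0$ closes the case.

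The main obstacle is $k=1$, for which the crude bound only delivers $|p|+|q|\ge n+1$, strictly weaker than $d+1=n+\frac{t-1}{2}$ whenever $t\ge 5$. Here I plan to enumerate integer solutions of $2p+tq=N$ explicitly: taking the particular solution $(p_0,q_0)=(\tfrac{t+1}{2},n-1)$, all solutions are of the form $(p_0+tj,\,q_0-2j)$ for $j\in\ZZ$. At $j=0$ the sum $|p|+|q|$ equals $\tfrac{t+1}{2}+(n-1)=d+1$. For $j\ge 0$, moving from $j$ to $j+1$ increases $|p|$ by exactly $t$ while $|q|$ can drop by at most $2$, so the sum grows by at least $t-2\ge 3$. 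The step $j=0\to j=-1$ produces the sum $d+2$, and each further step $j\to j-1$ (for $j\le -1$) increases both $|p|$ and $|q|$, raising the sum by $t+2$. Consequently the minimum over $j\in\ZZ$ is realised at $j=0$ with value $d+1$, giving $d_{G_t}(0,N)=d+1$ and completing the proof that $\chi_d(G_t)\le N$, which matches the lower bound.
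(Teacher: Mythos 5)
Your proposal is correct, and the upper bound is argued by a genuinely different route from the paper. Both proofs use the same periodic coloring $f(v)\equiv v\pmod{N}$ with $N=1+t\bigl(d-\frac{t-3}{2}\bigr)$, but the paper first observes $td<2N$ to reduce everything to the single pair $(0,N)$ and then rules out a short $0,N$-path by a three-case analysis of which vertices $p't\pm 2r'$ can appear on a shortest path; you instead invoke the exact formula $d_{G_t}(0,v)=\min\{|p|+|q|: 2p+tq=v\}$, dispatch $k\ge 2$ with the crude bound $|p|+|q|\ge |v|/t$, and settle $k=1$ by enumerating the one-parameter family of solutions of $2p+tq=N$, which even yields the exact value $d_{G_t}(0,N)=d+1$. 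Your treatment of the lower bound is in fact more careful than the paper's: Theorem~\ref{lowdist} requires $d\ge\frac{t+1}{2}$, which fails only for $t=5$, $d=2$, a boundary case the paper's proof silently skips and which you correctly plug via $\chi_2(G_5)=6$ (note that this result appears in a \emph{later} subsection of the paper, Theorem~\ref{2dist}, though its proof is independent so there is no circularity). Two cosmetic quibbles: your identity $N-(d+1)=(t-1)\bigl(n-\frac12\bigr)$ is off by one --- the correct value is $(t-1)\bigl(n-\frac12\bigr)+1$ --- but positivity, which is all you need, still holds; and in the $j=0$ computation you should note explicitly that $n\ge\frac{t-3}{2}\ge 1$ so that $|n-1|=n-1$, which you do record earlier. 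Neither affects the validity of the argument.
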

\begin{proof}
Let $\ell=1+t\left(d-\frac{t-3}{2}\right)$. From Theorem~\ref{lowdist} it follows that $\chi_d(G_t) \geq \ell$, so it remains to prove that $\chi_d(G_t) \leq \ell$. Let $f$ be a coloring of the vertices of $G_t$ obtained by using the following pattern on the consecutive vertices of $\ZZ$: 
$$123\dots(\ell-1)\ell.$$ 

We claim that $f$ is a $d$-distance coloring for $G_t$. First, observe that $td < 2 \ell$. This means that $d_{G_t}(a,b) \leq d$ implies $|b-a| < 2 \ell$ for any two integers $a$ and $b$. 
%In other words, any two distinct vertices $a, b \in V(G_t)$, colored with the same color $j$, are at distance at least $d+1$ in $G_t$ if $|b-a| \geq 2 \ell$.
Therefore, for any $i\in \ZZ$, we only need to check the distances between the vertices $i, i+1 \ldots, i+2\ell-1$ in $G_t$. Moreover, from the definitions of $f$ and the $d$-distance coloring, if suffices to prove that $d_{G_t}(0,\ell) \geq d+1$. Suppose to the contrary that $d_{G_t}(0,\ell)=d' \leq d$. 

%$$ td \le   2\left(  t+1+t\left(1+d-\frac{t+1}{2}\right) \right)$$
%	$$ td \le 2t+2+2t+2td-2t(\frac{t+1}{2})$$
%	$$td \ge -4t-2+t(t+1)$$
%	$$td \ge -4t-2+t^2+t$$
%	$$d\ge t^2-3t-2 = t-3- \frac{2}{t}$$ 
%	holds for $d \ge t-3$ OK

First, if $d'\le d-\frac{t-3}{2}$, then $d't < \ell$, which means that $d_{G_t}(0,\ell) > d'$, a contradiction.
Hence, let $d'= d-\frac{t-3}{2}+x$, where $x \in \{1, 2, \ldots, \frac{t-3}{2}\}$. We may write, $\ell - 0 = \ell =pt\pm2r$, where $p$ and $r$ are positive integers and $p+r=d'$. We distinguish three cases. 

\textbf{Case 1.} $p \geq d' - x +1$. \\
%Since $p+r=d'$, we have $r \leq x-1$. 
%From the fact that $pt > \ell$ we derive that a shortest $0,\ell$-path contains the following vertices: $$ 0, t, 2t, \ldots, pt, pt-2, pt-4, \ldots, pt-2r.$$ But, since $r \leq \frac{t-3}{2}-1$, we have $pt-2r > \ell$. This implies that each of the shortest $0,\ell$-paths does not contain a vertex $\ell$, a contradiction. \\
Since $p+r=d'$, we have $r \leq x-1$. 
	From the fact that $pt > \ell$ we derive that only the following vertices lie on a shortest $0,\ell$-path:
	$$p't-2r',p'\in [d'-x+1]_0, r'\in[ d'-p]_0.$$
	Note that only a vertex $p't-2r'$, where $p'\geq d'-x+1$ and $r'\in [d'-p]$, is greater than $\ell$. However, since $r \leq \frac{t-3}{2}-1$, we have $pt-2r > \ell$ whenever $p\geq d'-x+1$. This implies that each of the shortest $0,\ell$-paths does not contain a vertex $\ell$, a contradiction. 
	
\textbf{Case 2.} $p = d' - x$. \\
%In this case we derive that $pt = \ell-1$ and hence a shortest $0,\ell$-paths $P$ contains the vertices $$0, t, 2t, \ldots, pt, pt+2, \ldots, pt+2r.$$
% Clearly $pt+2q=\ell-1+2q \neq \ell$ for any $q \in \ZZ$, which means that $\ell \notin P$, a contradiction. \\
In this case we derive that $pt = \ell-1$ and hence in each of the shortest $0,\ell$-paths $P$ only the following vertices lie: $$p't+2r',p'\in [d'-x]_0, r'\in[d'-p]_0.$$
	Note that only a vertex $p't+2r'$, where $p'= d'-x$  and $r'>1$ is greater than $\ell$, but such a vertex has the same parity as $pt$. Since $pt = \ell-1$ , vertex $\ell$ has distinct parity as $pt$, and so $\ell$ does not lie on $P$, a contradiction. 

\textbf{Case 3.} $p \leq d' - x -1$. \\
%Since $p+r=d'$, we have $r \geq x+1$. 
%From $pt < \ell$ we infer that each of the shortest $0,\ell$-paths $P$ contains \Bostjan{(some of?)} the following vertices (note that $p$ can be $0$): $$ 0, \ldots, pt, pt+2, pt+4, \ldots, pt+2r.$$ 
%If $r < \frac{t+1}{2}$, then $pt+2r < l$, which means that $P$ does not contain $\ell$, a contradiction. \\
%Otherwise, ... HERE! }
From $pt < \ell$ we infer that on each of the shortest $0,\ell$-paths $P$ only the following vertices lie: $$p't+2r',p' \in [d' - x -1]_0, r'\in[d'-p]_0.$$
Note that the biggest vertex of these paths is $pt+2r$, where $p = d' - x -1$ and $r =  d'-p$ 
and with some calculation we prove that $pt+2r\le \ell-2$ and hence $pt+2r< \ell$.
Since $r = d'-p$ we have
$$pt+2r = pt+2(d'-p) ,$$ 
and using $p= d'-x-1$, we get
$$pt+2r= ( d'-x-1)t+2d'-2( d'-x-1).$$
Further we use $d'= d-\frac{t-3}{2}+x$, hence we get 
$$pt+2r=\left( d-\frac{t-3}{2}\right)t-t+2x+2.$$
Since $x\le \frac{t-3}{2}$, we derive
$$pt+2r\le  \left( d-\frac{t-3}{2}\right) t-1,$$ and from $ \left( d-\frac{t-3}{2}\right)t-1 = \ell -2$ we infer the desired inequality 
$$pt+2r\le \ell-2<\ell,$$ implying that $P$ does not contain $\ell$, a contradiction. \\

Therefore, there does not exist a $0,\ell$-path of length $d'$ for any $d' \leq d$, and so $f$ is indeed an $S$-packing $\ell$-coloring of $G_t$. The proof is complete. \qed
\end{proof}

\subsection{The $2$-distance chromatic number of $G_t$}\label{2dist}

In this subsection, we consider the $S$-packing coloring of $G_t$, where $S=(2,2,2,\ldots)$. For a set of integers $\{a_1,\ldots, a_r\}$ where $a_i< m$ for all $i$, we write $a\equiv a_1,\ldots, a_r \pmod m$ if $a\equiv a_i \pmod m$ for some $i\in [r]$.
	
\begin{theorem}\label{2dist}
If $t>3$ is an odd integer, then
$$\chi_2(G_t)=\left\{
        \begin{array}{ccl}
                5 &;&  t \equiv 1, 9 \pmod {10} \\
                6 &;&  t \equiv 3,5,7 \pmod {10} \\
        \end{array}
       \right.,$$
and $\chi_2(G_3)=7$. 
\end{theorem}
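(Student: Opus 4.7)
The plan is to split the theorem according to the three cases in the statement, and in each case establish both a lower and an upper bound. A common starting point is the $5$-clique $N_{G_t}[0] = \{-t,-2,0,2,t\}$: for every odd $t \geq 5$ these are five distinct vertices pairwise at distance at most $2$ in $G_t$ (the vertex $0$ is a common neighbor of any non-adjacent pair), which gives the universal bound $\chi_2(G_t) \ge 5$ and, more importantly, forces any $5$-coloring to restrict to a bijection on each ball $N[v]$; this structural observation will drive all the lower-bound analyses. For the upper bound when $t \equiv 1, 9 \pmod{10}$, I would use the period-$5$ pattern $12345$, with validity reducing to checking that none of the distance-$2$ differences $\{2,4,t,t-2,t+2,2t\}$ is divisible by $5$. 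A quick modular check shows this holds precisely when $t \not\equiv 0,2,3 \pmod 5$, which under $t$ odd is equivalent to $t \equiv 1, 9 \pmod{10}$.

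The case $t = 3$ is the easiest. The lower bound $\chi_2(G_3) \geq 7$ is immediate from Theorem~\ref{lowdist} with $d = 2$ (the hypothesis $d \geq (t+1)/2 = 2$ holds), evaluating to $1 + 3(2-0) = 7$. The matching upper bound is witnessed by the period-$7$ pattern $1234567$, since for $t=3$ the full set of forbidden differences is $D_2 = \{\pm 1, \pm 2, \pm 3, \pm 4, \pm 5, \pm 6\}$, containing no multiple of $7$.

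For $t \equiv 3, 5, 7 \pmod{10}$ with $t > 3$, I would first produce the upper bound $\chi_2(G_t) \leq 6$ via explicit periodic $6$-colorings whose period depends on a finer residue of $t$. When $3 \nmid t$, the simple period-$6$ pattern $123456$ suffices, since then all of $\{2,4,t,t\pm 2,2t\}$ reduce to nonzero residues modulo $6$ (the only obstruction, $2t \equiv 0 \pmod 6$, is exactly $3 \mid t$). When $3 \mid t$, the pattern must be lengthened: a construction tied to $t \pmod{30}$ (for example a period-$2t$ or period-$30$ pattern with a carefully arranged color sequence) is verified by inspecting the forbidden differences class by class.

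The main obstacle is the matching lower bound $\chi_2(G_t) \geq 6$ in the same case. Assuming a $5$-coloring $f$ exists, the bijection condition $f(N[v]) = \{1,\ldots,5\}$ together with $N[v] \cap N[v+2] = \{v, v+2\}$ yields the set identity $\{f(v+4),f(v+2-t),f(v+2+t)\} = \{f(v-2),f(v-t),f(v+t)\}$, with analogous identities at $N[v \pm t]$ and $N[v \pm 2t]$. The plan is to normalize $f$ on $N[0]$ by fixing the values $\{1,\ldots,5\}$ and then propagate these identities to pin down $f$ on an expanding window until some vertex receives two conflicting values. The conflict originates from the divisibility condition that one of $t, t-2, t+2$ (or, in the case $t \equiv 5 \pmod{10}$, also $2t$) is divisible by $5$, which rules out periodicity of period~$5$. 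As a prototype, direct case analysis for $t=5$, starting from $f(-5)=1, f(-2)=2, f(0)=3, f(2)=4, f(5)=5$, produces in a handful of forced steps the contradiction $\{f(-1),f(6)\} = \{2,5\}$ versus $\{f(-1),f(6)\} = \{2,3\}$. The same propagation mechanism, possibly repackaged through the summation identity $\sum_{x \in N[v]} f(x) \equiv 0 \pmod 5$, should adapt uniformly to general $t$ in each of the three residue classes; this step is the most delicate part of the argument.
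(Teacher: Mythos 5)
Your outline matches the paper's in several places (the $5$-clique $N[0]=\{-t,-2,0,2,t\}$ for the universal lower bound, the pattern $12345$ for $t\equiv 1,9\pmod{10}$, the pattern $1234567$ for $t=3$, and the pattern $123456$ for small/coprime-to-$3$ cases), but two essential pieces are missing or broken.

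First, the upper bound $\chi_2(G_t)\le 6$ when $3\mid t$ (e.g.\ $t=15,33,45,\ldots$) is not actually established, and the periods you propose cannot work. Any periodic pattern whose period $p$ divides $2t$ assigns the same color to $x$ and $x+2t$, which are at distance $2$ in $G_t$ via $x,x+t,x+2t$; this kills the period-$2t$ suggestion outright and also the period-$30$ suggestion for $t=15$. Worse, for $t=33$ the forbidden differences $\{2,4,31,33,35,66\}$ reduce modulo $30$ to $\{1,2,3,4,5,6\}$, so a period-$30$ coloring would need any $7$ cyclically consecutive positions to get distinct colors, forcing $7$ colors. The paper instead uses the pattern $(12345)^k(123456)^{\ell}$ of period exactly $t+1$ (where $t+1=5a+\ell$, $k=a-\ell$), for \emph{all} $t\equiv 3,5,7\pmod{10}$ with $t\ge 15$, and verifies it by tracking how far apart in the pattern two vertices at $\ZZ$-distance $2$, $t\pm2$, or $2t$ land. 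Some replacement of this kind is needed; your residue-class-by-residue-class check has nothing concrete to check against.

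Second, the lower bound $\chi_2(G_t)\ge 6$ for $t\equiv3,5,7\pmod{10}$, $t>3$, is the heart of the theorem, and you only verify it for $t=5$, asserting that the propagation ``should adapt uniformly.'' The set identities you write down do not by themselves pin down individual color values, and the summation identity $\sum_{x\in N[v]}f(x)\equiv 0\pmod 5$ is automatic (the five colors always sum to $15$), so it carries no information. The paper makes the rigidity precise by exhibiting a concrete finite subgraph $H_t\subseteq G_t$, namely $P_4\,\square\,P_t$ plus the edge $(2,1)(4,t)$: a $(2,2,2,2,2)$-coloring of the grid is forced (up to the initial choices on a $5$-vertex cross) into the diagonal pattern, and the extra edge then makes the vertex $(4,t)$ colorable only when $t\equiv 4\pmod 5$, i.e.\ $t\equiv 4,9\pmod{10}$, contradicting $t\equiv 3,5,7\pmod{10}$. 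Your $t=5$ computation is consistent with this mechanism, but as written the general case is an acknowledged gap, not a proof.
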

\begin{proof}
The lower bound $\chi_2(G_t)\ge 5$ is trivial. Indeed, since $G_t$ is $4$-regular, there are five vertices in the closed neighborhood $N[v]$ of a vertex $v\in V(G_t)$, and they must receive pairwise distinct colors. 

First, consider the case $t \equiv 1, 9 \pmod {10}$. Let $f:\mathbb{Z} \rightarrow [5]$ be a coloring produced by the following pattern of colors:
$$12345.$$
Clearly, for any two vertices $u,v\in \mathbb{Z}$ with $f(u)=f(v)$ we get $d_{\mathbb{Z}}(u,v)=5k$ for some $k\in \mathbb{N}$. On the other hand, note that two distinct vertices $u,v\in\mathbb{Z}$ are at distance at most $2$ in $G_t$ only if $|u-v|\equiv 1,2,3,4,7,8,9 \pmod {10}$, since either $t \equiv 1 \pmod {10}$ or $t \equiv 9 \pmod {10}$.  

Second, consider the case $t \equiv 3,5,7 \pmod {10}$ and $t>3$. For the upper bound $6$, we deal with small cases separately, that is, when $t\in\{5,7,13\}$ take a coloring $f$ produced by the following pattern of colors:
$$123456.$$
Clearly, for any two vertices $u,v\in \mathbb{Z}$ with $f(u)=f(v)$ we get $|u-v|=6k$ for some $k\in \mathbb{N}$. On the other hand, for $t=5$, two vertices $u,v\in\mathbb{Z}$ are at distance at most $2$ in $G_t$ only if $|u-v|\in\{2,3,4,5,7,10\}$; for $t=7$, two vertices $u,v\in\mathbb{Z}$ are at distance at most $2$ in $G_t$ only if $|u-v|\in\{2,4,5,7,9,14\}$; and, for $t=13$, two vertices $u,v\in\mathbb{Z}$ are at distance at most $2$ in $G_t$ only if $|u-v|\in\{2,4,11,13,15,26\}$. Since none of the integers in the distance sets is divisible by $6$, the coloring $f$ is a $(2,2,2,2,2,2)$-packing coloring of $G_t$ for all $t\in\{5,7,13\}$. 

Now, let $t\ge 15$, and let $a$ and $\ell<5$ be the unique integers such that $t+1=5a+\ell$. Next, let $k=a-\ell$. Consider the following pattern of colors given to consecutive integers:
$$(12345)^k(123456)^\ell.$$
Note that the basic unit of the pattern $(12345)^k(123456)^{\ell}$ consist of exactly $t+1$ colors, hence an integer $x$ colored by a color $i\in [6]$ is at $\mathbb{Z}$-distance $t$ to a vertex whose color precedes or follows the place of the color of $x$ in the basic unit of the pattern; that is, $i-1$ or $i+1$ (with respect to either modulo $5$ or modulo $6$ depending in which place of the basic unit of the pattern the color is taken). Similarly, an integer $x$ is at $\mathbb{Z}$-distance $2t$ to a vertex whose color is two after or two before the place of the color of $x$ in the basic unit of the pattern, and the same holds, of course, also for the distance $2$. It is also easy to see that an integer $x$ is at $\mathbb{Z}$-distance $t\pm 2$ to an integer which is either three places before, or one place before, or one place after, or three places after the place of the color of $x$ in the basic unit of the pattern. Combining these observations with the implication $$d_{G_t}(x,y)\le 2\implies d_{\mathbb{Z}}(x,y)\in \{2,t\pm 2,2t\},$$
we infer that the color of $x$ is different from the colors of integers at distance at most $2$ from $x$ in $G_t$.

Now we prove the lower bound of $\chi_2(G_t)\ge6$ for $t\equiv 3,5,7 \pmod{10}$ and $t>3$. The proof is by contradiction, thus suppose that $G_t$ is $(2,2,2,2,2)$-packing colorable. Let $H_t$ be the graph obtained from $P_4\square P_t$ by adding the edge $(2,1)(4,t)$, where $V(H_t) =\{(i,j):i\in [4], j\in[t]$ and edges are defined in the natural way; see Figure~\ref{lowbound2}. Note that $H_t$ is a subgraph of $G_t$ (see Figure~\ref{subHt} in which $H_t$ is embedded in $G_t$). Hence, $H_t$ is $(2,2,2,2,2)$-packing colorable too. Let $f:V(H_t) \rightarrow [5]$ be a $(2,2,2,2,2)$-packing coloring. Without loss of generality, assume  $f(2,2) = 2$, $f(1,2)= 4$,  $f(3,2)= 5$, $f(2,1)= 1$ and $f(2,3)= 3$, as shown in Figure~\ref{lowbound2}. 
\begin{figure}
	\centering
	\scalebox{0.60}{
		\begin{tikzpicture}
		%l_5
		\node[mark size=3pt,color=black] at (-303:1.8302){\pgfuseplotmark{*}};	
		\node[mark size=3pt,color=black] at (-305.6:2.8102){\pgfuseplotmark{*}};	
		\node[mark size=4pt,color=black] at (-307:3.802){\pgfuseplotmark{*}};	
		\node[mark size=4pt,color=black] at (-308.7:4.802){\pgfuseplotmark{*}};	
		\node[mark size=4pt,color=black] at (-309.2:5.802){\pgfuseplotmark{*}};	
		\node[mark size=4pt,color=black] at (-309.6:6.802){\pgfuseplotmark{*}};	
		\node[mark size=3pt,color=black] at (-309.7:7.802){\pgfuseplotmark{*}};	
		\node[mark size=3pt,color=black] at (-309.9:8.802){\pgfuseplotmark{*}};	
		\draw[line width=1mm,color = black] (-307:3.802) -- (-309.6:6.802);
		%l_4
		\node[mark size=3pt,color=black] at (-263.5:2.03){\pgfuseplotmark{*}};	 
		\node[mark size=3pt,color=black] at (-266.5:3.03){\pgfuseplotmark{*}};	 
		\node[mark size=4pt,color=black] at (-267.5:4.03){\pgfuseplotmark{*}};	 
		\node[mark size=4pt,color=black] at (-268.5:5){\pgfuseplotmark{*}};	 
		\node[mark size=4pt,color=black] at (-269:6){\pgfuseplotmark{*}};	 
		\node[mark size=4pt,color=black] at (-269.5:7){\pgfuseplotmark{*}};	 
		\node[mark size=3pt,color=black] at (-269.8:8){\pgfuseplotmark{*}};	 
		\node[mark size=3pt,color=black] at (-270:9){\pgfuseplotmark{*}};	
		\draw[line width=1mm,color = black] (-267.5:4.03) -- (-269.5:7);
		%l_3	 
		\node[mark size=3pt,color=black] at (-226.2:2.2383){\pgfuseplotmark{*}};	 			 
		\node[mark size=3pt,color=black] at (-227.4:3.2383){\pgfuseplotmark{*}};	 	 
		\node[mark size=4pt,color=black] at (-228.6:4.2383){\pgfuseplotmark{*}};	 
		\node[mark size=4pt,color=black] at (-229.2:5.2383){\pgfuseplotmark{*}};	
		\node[mark size=4pt,color=black] at (-229.4:6.2383){\pgfuseplotmark{*}};	
		\node[mark size=4pt,color=black] at (-229.9:7.2383){\pgfuseplotmark{*}};		
		\draw[line width=1mm,color = black] (-228.6:4.2383) -- (-229.9:7.2383);
		%l_2	  		 
		\node[mark size=3pt,color=black] at (-186.2:2.4599){\pgfuseplotmark{*}};		  		  		 
		\node[mark size=3pt,color=black] at (-187.4:3.4599){\pgfuseplotmark{*}};	
		\node[mark size=4pt,color=black] at (-188.5:4.4599){\pgfuseplotmark{*}};	
		\node[mark size=4pt,color=black] at (-189:5.4599){\pgfuseplotmark{*}};	
		\node[mark size=4pt,color=black] at (-189.5:6.4529){\pgfuseplotmark{*}};	
		\node[mark size=4pt,color=black] at (-189.9:7.4529){\pgfuseplotmark{*}};		
		\draw[line width=1mm,color = black] (-188.5:4.4599) -- (-189.9:7.4529);
		\draw (-191.5:5.699) node [text=black]{${\bf 3}$}; 
		\draw (-186:5.8299) node [text=black]{${\bf _{(2,3)}}$}; 
		%l_1		  		  		 
		\node[mark size=3pt,color=black] at (-146.2:2.6992){\pgfuseplotmark{*}};	 
		\node[mark size=3pt,color=black] at (-147.4:3.6792){\pgfuseplotmark{*}};	
		\node[mark size=4pt,color=black] at (-148.5:4.6792){\pgfuseplotmark{*}};	
		\node[mark size=4pt,color=black] at (-149:5.6732){\pgfuseplotmark{*}};	
		\node[mark size=4pt,color=black] at (-149.5:6.6702){\pgfuseplotmark{*}};	
		\node[mark size=4pt,color=black] at (-149.9:7.6702){\pgfuseplotmark{*}};		
		\draw[line width=1mm,color = black] (-148.5:4.6792) -- (-149.9:7.6702);
		\draw (-151.3:4.90092) node [text=black]{${\bf 4}$}; 
		\draw (-151.4:5.90092) node [text=black]{${\bf 2}$}; 
		\draw (-151.5:6.90092) node [text=black]{${\bf 5}$}; 
		\draw  (-143.3:4.33992) node [text=black]{${\bf _{(1,2)}}$}; 
		\draw (-145:5.31292) node [text=black]{${\bf _{(2,2)}}$}; 
		\draw (-146.1:6.31292) node [text=black]{${\bf _{(3,2)}}$}; 	
		%l_0
		\node[mark size=3pt,color=black] at (-106.2:2.9142){\pgfuseplotmark{*}};	
		\node[mark size=3pt,color=black] at (-107.4:3.9042){\pgfuseplotmark{*}};	
		\node[mark size=4pt,color=black] at (-108.5:4.9042){\pgfuseplotmark{*}};	
		\node[mark size=4pt,color=black] at (-109:5.9042){\pgfuseplotmark{*}};	
		\node[mark size=4pt,color=black] at (-109.5:6.9042){\pgfuseplotmark{*}};	
		\node[mark size=4pt,color=black] at (-109.9:7.9042){\pgfuseplotmark{*}};	
		\draw[line width=1mm,color = black] (-108.5:4.9042) -- (-109.9:7.9042);
		\draw (-111.8:5.63042) node [text=black]{${\bf 1}$}; 
		\draw (-102.6:4.63042) node [text=black]{${\bf _{(1,1)}}$}; 
		\draw (-103.9:5.6042) node [text=black]{${\bf _{(2,1)}}$}; 
		\draw (-105.4:6.6042) node [text=black]{${\bf _{(3,1)}}$}; 
		%l_t-1
		\node[mark size=3pt,color=black] at (-58.8:1.1802){\pgfuseplotmark{*}};		  		  		 
		\node[mark size=3pt,color=black] at (-64.3:2.1402){\pgfuseplotmark{*}};	
		\node[mark size=4pt,color=black] at (-66.8:3.1102){\pgfuseplotmark{*}};	
		\node[mark size=4pt,color=black] at (-68.25:4.1102){\pgfuseplotmark{*}};	
		\node[mark size=4pt,color=black] at (-68.75:5.102){\pgfuseplotmark{*}};	
		\node[mark size=4pt,color=black] at (-69.4:6.102){\pgfuseplotmark{*}};	
		\node[mark size=3pt,color=black] at (-69.6:7.102){\pgfuseplotmark{*}};	
		\node[mark size=3pt,color=black] at (-69.9:8.102){\pgfuseplotmark{*}};	
		\draw[line width=1mm,color = black] (-66.8:3.1102) -- (-69.4:6.102);
		\draw (-73:5.772) node [text=black]{${\bf _{(4,t)}}$}; 
		%l_t-2
		\node[mark size=3pt,color=black] at (-22.4:1.3802){\pgfuseplotmark{*}};	
		\node[mark size=3pt,color=black] at (-26:2.3502){\pgfuseplotmark{*}};	
		\node[mark size=4pt,color=black] at (-27.5:3.3502){\pgfuseplotmark{*}};	
		\node[mark size=4pt,color=black] at (-28.3:4.3502){\pgfuseplotmark{*}};	
		\node[mark size=4pt,color=black] at (-28.9:5.3502){\pgfuseplotmark{*}};	
		\node[mark size=4pt,color=black] at (-29.4:6.3502){\pgfuseplotmark{*}};	
		\node[mark size=3pt,color=black] at (-29.6:7.3502){\pgfuseplotmark{*}};	
		\node[mark size=3pt,color=black] at (-29.9:8.3502){\pgfuseplotmark{*}};	
			\draw[line width=1mm,color = black] (-27.5:3.3502) -- (-29.4:6.3502);

	%	\bonusspiral [dashed] [black](0,0)(90:270)(-1:-8)[3];
	%	\bonusspiral [dashed] [black](0,0)(90:270)(-2:-9)[3];
		\bonusspiral[black](0,0)(90:130)(-1:-1.4422)[0.1111];
		\bonusspiral[black](0,0)(90:130)(-2:-2.4422)[0.1111];
		\bonusspiral[black](0,0)(-150:130)(-1.67:-3.4422)[0.1111];
		\bonusspiral[black](0,0)(-150:130)(-2.67:-4.4422)[0.1111];
		\bonusspiral[black](0,0)(-150:130)(-3.67:-5.4422)[0.1111];
		\bonusspiral[black](0,0)(-150:130)(-4.67:-6.4422)[0.1111];
		\bonusspiral[black](0,0)(-150:130)(-5.67:-7.4422)[0.1111];
		\bonusspiral[black](0,0)(-150:130)(-6.67:-8.4422)[0.1111];
		\bonusspiral[black](0,0)(210:240)(-7.67:-8.05)[0.1111];
		\bonusspiral[black](0,0)(210:240)(-8.67:-9.05)[0.1111];
		\bonusspiral[line width=1mm,color = black](0,0)(113.2:130)(-3.1102:-3.4422)[0.1111];
		\bonusspiral[line width=1mm,color = black](0,0)(111.75:130)(-4.1102:-4.4422)[0.1111];
		\bonusspiral[line width=1mm,color = black](0,0)(111.25:130)(-5.102:-5.4422)[0.1111];
		\bonusspiral[line width=1mm,color = black](0,0)(110.6:130)(-6.102:-6.4422)[0.1111];

		\bonusspiral[line width=1mm,color = black](0,0)(-150:31.5)(-3.67:-4.9042)[0.1111];
		\bonusspiral[line width=1mm,color = black](0,0)(-150:71)(-4.67:-6.102)[0.1111];
		\bonusspiral[line width=1mm,color = black](0,0)(-150:30.5)(-5.67:-6.9042)[0.1111];
		\bonusspiral[line width=1mm,color = black](0,0)(-150:30.1)(-6.67:-7.9042)[0.1111];
		
	\bonusspiral[dashed, black](0,0)(170:172)(-1.4422:-1.67)[0.1111];
	\bonusspiral[dashed, black](0,0)(170:172)(-2.4422:-2.67)[0.1111];
	\bonusspiral[dashed, line width=1mm,color = black](0,0)(170:172)(-3.4422:-3.67)[0.1111];
	\bonusspiral[dashed, line width=1mm,color = black](0,0)(170:172)(-4.4422:-4.67)[0.1111];
	\bonusspiral[dashed, line width=1mm,color = black](0,0)(170:172)(-5.4422:-5.67)[0.1111];
	\bonusspiral[dashed, line width=1mm,color = black](0,0)(170:172)(-6.4422:-6.67)[0.1111];
	\bonusspiral[dashed, black](0,0)(170:171.5)(-7.4422:-7.67)[0.1111];
	\bonusspiral[dashed, black](0,0)(170:171.5)(-8.4422:-8.67)[0.1111];
			
		\draw (-20:1) -- (-30:8.8);
		\draw (-50:0.7) -- (-70:8.6);
		\draw (-105:2.4) -- (-110:8.4);
		\draw (-145:2.2) -- (-150:8.2);
		\draw (-185:2.0) -- (-190:8);
		\draw (-225:1.8) -- (-230:7.8);
		\draw (-262:1.6) -- (-270:9.4);
		\draw (-300:1.4) -- (-310:9.2);
		\end{tikzpicture}
	}
	\caption[]{Subgraph $H_t$ (with thick edges) of the graph $G_t$.}
	\label{subHt}
\end{figure}
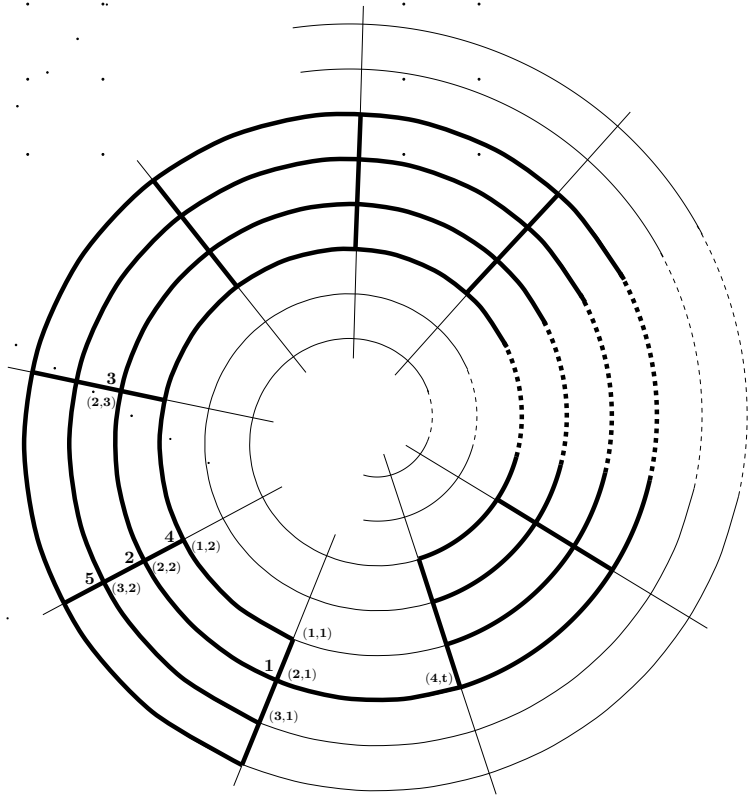

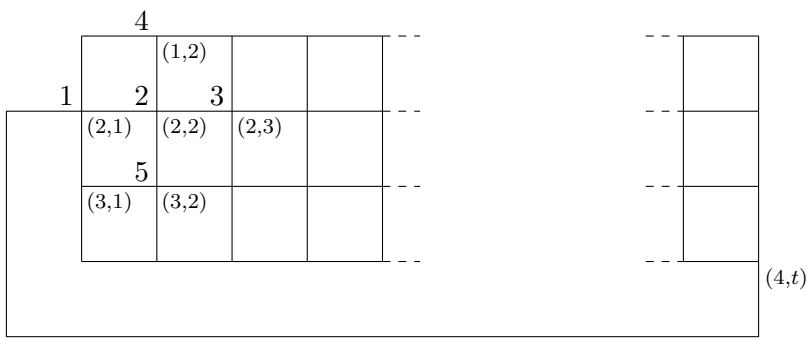
\begin{figure}
\centering
\begin{tikzpicture}
\draw (0,0) --(4,0) (0,-1) --(4,-1) (0,-2) --(4,-2) (0,-3) --(4,-3) (8,0) --(9,0) (8,-1) --(9,-1) (8,-2) --(9,-2)(8,-3) --(9,-3);
\draw (0,0)--(0,-3) (1,0)--(1,-3) (2,0)--(2,-3)(3,0)--(3,-3) (4,0)--(4,-3) (8,0)--(8,-3)(9,0)--(9,-3);
\draw (0,-1)--(-1,-1)    (-1,-1) --(-1,-4)  (-1,-4)--(9,-4)  	(9,-4)  --(9,-3);
\draw[dashed] (4,0) --(4.5,0)(4,-1) --(4.5,-1) (4,-2) --(4.5,-2) (4,-3) --(4.5,-3) (7.5,0) --(8,0) (7.5,-1) --(8,-1)(7.5,-2) --(8,-2)(7.5,-3) --(8,-3);
\node[mark size=3pt,color=black] at (0,0){\pgfuseplotmark{*}};
\node[mark size=3pt,color=black] at (0,-1){\pgfuseplotmark{*}};
\node[mark size=3pt,color=black] at (0,-2){\pgfuseplotmark{*}};
\node[mark size=3pt,color=black] at (0,-3){\pgfuseplotmark{*}};
\node[mark size=3pt,color=black] at (1,0){\pgfuseplotmark{*}};
\node[mark size=3pt,color=black] at (1,-1){\pgfuseplotmark{*}};
\node[mark size=3pt,color=black] at (1,-2){\pgfuseplotmark{*}};
\node[mark size=3pt,color=black] at (1,-3){\pgfuseplotmark{*}};
\node[mark size=3pt,color=black] at (4,-1){\pgfuseplotmark{*}};
\node[mark size=3pt,color=black] at (2,0){\pgfuseplotmark{*}};
\node[mark size=3pt,color=black] at (2,-1){\pgfuseplotmark{*}};
\node[mark size=3pt,color=black] at (2,-2){\pgfuseplotmark{*}};
\node[mark size=3pt,color=black] at (2,-3){\pgfuseplotmark{*}};
\node[mark size=3pt,color=black] at (4,-2){\pgfuseplotmark{*}};
\node[mark size=3pt,color=black] at (3,0){\pgfuseplotmark{*}};
\node[mark size=3pt,color=black] at (3,-1){\pgfuseplotmark{*}};
\node[mark size=3pt,color=black] at (3,-2){\pgfuseplotmark{*}};
\node[mark size=3pt,color=black] at (3,-3){\pgfuseplotmark{*}};
\node[mark size=3pt,color=black] at (4,-3){\pgfuseplotmark{*}};
\node[mark size=3pt,color=black] at (4,0){\pgfuseplotmark{*}};
\node[mark size=3pt,color=black] at (8,0){\pgfuseplotmark{*}};
\node[mark size=3pt,color=black] at (8,-1){\pgfuseplotmark{*}};
\node[mark size=3pt,color=black] at (8,-2){\pgfuseplotmark{*}};
\node[mark size=3pt,color=black] at (8,-3){\pgfuseplotmark{*}};
\node[mark size=3pt,color=black] at (9,0){\pgfuseplotmark{*}};
\node[mark size=3pt,color=black] at (9,-1){\pgfuseplotmark{*}};
\node[mark size=3pt,color=black] at (9,-2){\pgfuseplotmark{*}};
\node[mark size=3pt,color=black] at (9,-3){\pgfuseplotmark{*}};
\draw (0.38,-1.22) node [text=black]{$_{(2,1)}$}; 
\draw (1.38,-1.22) node [text=black]{$_{(2,2)}$}; 
\draw (2.38,-1.22) node [text=black]{$_{(2,3)}$}; 
\draw (1.38,-0.22) node [text=black]{$_{(1,2)}$}; 
\draw (1.38,-2.22) node [text=black]{$_{(3,2)}$}; 
\draw (9.38,-3.22) node [text=black]{$_{(4,t)}$}; 
\draw (0.38,-2.22) node [text=black]{$_{(3,1)}$}; 
\draw (0.8,-0.8) node [text=black]{$2$}; 
\draw (0.8,-1.8) node [text=black]{$5$}; 
\draw (0.8,0.2) node [text=black]{$4$}; 
\draw (-0.2,-0.8) node [text=black]{$1$}; 
\draw (1.8,-0.8) node [text=black]{3}; 
\end{tikzpicture}
\caption[]{Subgraph $H_t$  of the graph $G_t$.}
\label{lowbound2}
\end{figure}

We consider the following three cases for the colors of vertices $(1,1)$ and $(3,1)$ given by $f$:
\begin{itemize}
\item [1)] $f(3,1) = 4, f(1,1)=5$,
\item [2)] $f(3,1) = 4, f(1,1)=3$,
\item [3)] $f(3,1) = 3$.
\end{itemize}
Note that after coloring the vertices of $A=\{(1,1),(1,2),(2,1),(2,2),(2,3),(3,1),(3,2)\}$, colors of vertices in $V(P_4\square P_t)-(A\cup\{(4,t)\})$ are uniquely determined, since $f$ is assumed to be a $(2,2,2,2,2)$-packing coloring.

{\bf Case 1.} $f((3,1)) = 4, f((1,1))=5$.\\
Note that the colors of vertices in $A$ imply $f(1,3)=1$. Since $f(3,1)=4$, we infer that $(3,3)$ is at distance at most $2$ in $H_t$ from any color in $[5]$, which is a contradiction.

{\bf Case 2.} $f((3,1)) = 4, f((1,1))=3$.\\
As mentioned above, the colors of vertices in $V(P_4\square P_t)-(A\cup\{(4,t)\})$ are uniquely determined by the colors given to vertices of $A$ by $f$. More precisely, $f$ partitions the vertices of $V(P_4\square P_t)$ into the sets $V_\ell$, where $\ell\in [5]$, and $f(i,j)=\ell$ if and only $(i,j)\in V_{\ell}$, as follows:

$V_1=\{(1,j\equiv4\pmod 5), (2,j \equiv 1\pmod 5), (3,j \equiv 3\pmod5), (4,j \equiv 0\pmod 5)\},$

$V_2=\{(1,j \equiv 0\pmod 5), (2,j \equiv 2\pmod 5), (3,j \equiv 4\pmod5), (4,j \equiv 1\pmod 5)\},$

$V_3=\{(1,j \equiv 1\pmod 5), (2,j \equiv 3\pmod 5), (3,j \equiv 0\pmod5), (4,j \equiv 2\pmod 5)\},$

$V_4=\{(1,j \equiv 2\pmod 5), (2,j \equiv 4\pmod 5), (3,j \equiv 1\pmod5), (4,j \equiv 3\pmod 5)\},$

$V_5=\{(1,j \equiv 3\pmod 5), (2,j \equiv 0\pmod 5), (3,j \equiv 2\pmod5), (4,j \equiv 4\pmod 5)\}.$

Clearly, the sets $V_1,\ldots, V_5$ are pairwise disjoint and $V_1 \cup \cdots \cup V_5=V(H_t)$. Note that in $H_t$ the vertex $(4,t)$ is at distance at most $2$ from all vertices in $\{(1,1),(2,1),(2,2),(3,1)\}$, and these vertices are colored by colors $1,2,3$ and $4$. Hence the only color possibly available to color $(4,t)$ is $5$. According to the $2$-distance coloring $f$ obtained in Case 2, the vertex  $(4,t)$ is colored by $5$ if and only if $t\equiv 4\pmod 5$. This implies that $t\equiv 4,9\pmod {10}$, which contradicts $t\equiv 3,5,7 \pmod{10}$. Hence, $H_t$ is not $(2,2,2,2,2)$-colorable, and so $G_t$ is not $(2,2,2,2,2)$-colorable.

{\bf Case 3.} $f(3,1) = 3$.\\
This case is very similar to the case $2$, and we omit the details.
%Again there is exact one color from $\{1,2,3,4,5\}$ for a vertex $(i,j)\in V(H_t)- A- \{(4,t)\}$. Specifically vertex $(i,j)\in V(H_t)- A- \{(4,t)\}$ is colored with a color $\ell$, if $(i,j)\in V_\ell,\ell=1,2,\dots 5$.
%Sets $V_\ell, \ell=1,2,\dots,5$ are defined as follows:
%$V_1=\{(1,3\pmod 5), (2,1\pmod 5), (3,4\pmod5), (4,2\pmod 5)\},\\
%V_2=\{(1,4\pmod 5), (2,2\pmod 5), (3,0\pmod5), (4,3\pmod 5)\},\\
%V_3= 3; C=\{(1,0\pmod 5), (2,3\pmod 5), (3,1\pmod5), (4,4\pmod 5)\},\\
%V_4= 4; D=\{(1,2\pmod 5), (2,0\pmod 5), (3,3\pmod5), (4,1\pmod 5)\},\\
%V_5= 5; E=\{(1,1\pmod 5), (2,4\pmod 5), (3,2\pmod5), (4,0\pmod 5)\}.$
%Since there is pairwise distance at most $2$ between a vertex $(4,t)$ with arbitrary vertex from $\{(1,1),(2,1),(2,2),(3,1)\}$ and these vertices are colored with colors $1,2,3$ and $5$. Hence color $4$ is only one available color for a vertex $(4,t)$. According to $2$-distance coloring $f$ vertex  $(4,t)$ is colored with color $5$ if and only if  $(4,t) \in V_4$, thus there must be $t\equiv 1\pmod 5$. But we have $t\equiv 3,5,7 \pmod{10}$. Since $1\pmod 5\not\equiv3,5,7 \pmod{10} $ there vertex  $(4,t)$ cannot be colored with color $4$ neither $\{1,2,3,5\}$ and this is a contradiction with assumption that $G_t$ is $5$-colorable. Hence we proved that we need at least $6$  colors for $2$-distance coloring of $G_t$.

Finally, note that the pattern $$1234567$$
for coloring the elements of $\mathbb{Z}$ works in $G_3$. Indeed, $d_{G_3}(x,y)\le 2$ if and only if $d_{\mathbb{Z}}(x,y)\in\{1,2,3,4,5,6\}$, and by the above pattern, two integers $x$ and $y$ receive the same color only if $|x-y|$ is divisible by $7$. Hence, $\chi_2(G_3)\le 7$. The converse, that we need at least $7$ colors for a $2$-distance coloring of $G_3$ uses a similar argument. Since for two vertices $x$ and $y$ with the same color, $d_{\mathbb{Z}}(x,y)\notin\{1,2,3,4,5,6\}$, we infer that every seven consecutive integers must receive pairwise distinct colors in any $2$-distance coloring of $G_t$, yielding $\chi_2(G_3)\ge 7$. 
\qed
\end{proof}

%%%%%%%%%%%%%%%%%%%%%%
%%% CONCLUDING
%%%%%%%%%%%%%%%%%%%%%%%

\section{Concluding remarks}

Some of the results of this paper can be used for determining the $S$-packing chromatic numbers of certain circulant graphs. Recall that a {\em circulant graph} with parameters $n,s_1\ldots,s_k$ is the graph $G=C_n(s_1,\ldots, s_k)$ with $V(G)=\{0,\ldots,n-1\}$ and vertex $i\in V(G)$ is adjacent to $i+s_j \pmod n$ for all $j\in [k]$. Note that the adjacencies are defined in the same way as in distance graphs, yet the circulant graphs are finite. See~\cite{ap-1979} for an introductory study of circulant graphs and a recent paper~\cite{lp-2017}, where the so-called perfect colorings of circulant graphs are studied (in fact, the circulant graphs in the latter paper are infinite and correspond to the distance graphs).
It is well-known that 
$C_n(s_1,\ldots, s_k)$ is connected if and only if $GCD(n,s_1,\ldots, s_k)=1$. 

The patterns used in some upper bounds for the $S$-packing chromatic numbers of graphs $G_t=G(\ZZ,\{2,t\})$ in this paper can be directly applied for $S$-packing colorings of the circulant graphs $C_n(2,t)$, whenever $n$ is divisible by the length of the pattern. On the other hand, lower bounds (obtained in this paper) for the $S$-packing chromatic number of a given distance graph directly imply the same lower bound for the $S$-packing coloring of the corresponding circulant graph. We restrict our attention to circulant graphs $C_n(2,t)$ and may assume without loss of generality that $n\ge 2t$.

For the sequence $S = (1,1,1)$, Theorem~\ref{111} provides the lower bound $3$.  However, in the proof of the theorem we did not use a pattern, but a greedy algorithm. We now present the patterns that can be used for coloring of $C_n(2,t)$ when $n$ is divisible by $t+2$. 

If $t=4k+1$ (resp. $t=4k-1$), then we consider the following pattern of colors given to consecutive vertices $C_n(2,t)$:
 $$(1122)^k132 \mbox{ (resp. }(1122)^k3).$$ 
It is easy to see that this yields a proper coloring of $C_n(2,t)$, when $n$ is divisible by the length of the pattern, which is $t+2$ in both cases. 
 
\begin{proposition}
If $t\ge 3$ is an odd integer and $n=(t+2)m$, where $m\in \NN-\{1\}$, then $\chi(C_n(2,t))=3$.
\end{proposition}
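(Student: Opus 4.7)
The plan is to handle the lower bound via an odd cycle (reusing the structure from Theorem~\ref{111}) and the upper bound by verifying that the displayed pattern is a proper $3$-coloring.

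For the lower bound $\chi(C_n(2,t)) \geq 3$, I would observe that since $m \geq 2$ we have $n = (t+2)m \geq 2t+4$, so in $C_n(2,t)$ the vertices $0,2,4,\ldots,2t,t$ are pairwise distinct modulo $n$. The cycle $0,2,4,\ldots,2t,t,0$ uses the edges of ``type $2$'' between consecutive even integers, then two edges of ``type $t$'' at the end; it has $t+2$ vertices, which is odd because $t$ is odd. So $C_n(2,t)$ contains an odd cycle and therefore $\chi(C_n(2,t)) \geq 3$.

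For the upper bound, let $c:\{0,1,\ldots,t+1\}\to\{1,2,3\}$ denote the pattern described before the proposition, and define $f(i) = c(i \bmod (t+2))$ on $V(C_n(2,t)) = \{0,\ldots,n-1\}$; this is well-defined since $(t+2)\mid n$. The key reduction is that $t \equiv -2 \pmod{t+2}$, hence
\[
f(i+t) \;=\; c\bigl((i+t)\bmod(t+2)\bigr) \;=\; c\bigl((i-2)\bmod(t+2)\bigr)
\]
for every $i$. Consequently, the condition $f(i)\neq f(i+t)$ is equivalent (by shifting $i$) to the condition $f(i)\neq f(i+2)$. The whole problem thus reduces to showing that $c$ is a proper $3$-coloring of the cycle $C_{t+2}(2)$, which is an odd cycle of length $t+2$.

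This last verification is an elementary finite check on the two patterns. For $t=4k+1$ with pattern $(1122)^k132$, the consecutive step-two pairs inside each $1122$ block alternate between $(1,2)$ and $(2,1)$; the only pairs that require explicit attention are those crossing the junction between the last $1122$ block and the tail $132$, and those that wrap around from the tail back to positions $0$ and $1$. A direct case check confirms each such pair consists of distinct colors. The same verification for $t=4k-1$ with pattern $(1122)^k3$ is completely analogous. The main (minor) obstacle is simply performing this boundary check cleanly in both cases; the reduction to the odd cycle $C_{t+2}(2)$ makes the remaining work entirely finite and local to the pattern.
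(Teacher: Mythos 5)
Your proposal is correct and follows essentially the same route as the paper: the lower bound via the odd cycle $0,2,\ldots,2t,t,0$ (which your condition $n\ge 2t+4$ correctly guarantees embeds in $C_n(2,t)$), and the upper bound via the periodic patterns $(1122)^k132$ and $(1122)^k3$ of length $t+2$. Your observation that $t\equiv -2\pmod{t+2}$ reduces both adjacency conditions to the single check $c(j)\neq c(j+2)$ on the cyclic pattern is a clean way of organizing the verification that the paper dismisses as ``easy to see.''
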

 
Let $S = (1,1,2,2)$. Theorem~\ref{1122} and its proof providing the patterns for $S$-packing colorings yield the following result.
 
\begin{corollary}
 If $t\ge 3$, $S=(1,1,2,2)$ and $n=2(t+2)m$, where $m\in \NN$, then $\chi_S(C_n(2,t))=4$.
\end{corollary}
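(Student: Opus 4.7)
\medskip
\noindent\textbf{Proof plan.} The strategy is to transfer both directions of Theorem~\ref{1122} to the circulant setting, exploiting that the lower-bound argument is local (confined to a small vertex set and using only edges at difference $2$ or $t$) while the upper-bound coloring is periodic with period exactly $2(t+2)$.

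For the lower bound $\chi_S(C_n(2,t)) \ge 4$, I would observe that the contradiction derived in Theorem~\ref{1122} uses only the vertices $U = \{0, 2, 4, \ldots, 2t, 2t+2, t, t+2\}$ and adjacencies given by differences of exactly $2$ or $t$. Since $n = 2(t+2)m \ge 2(t+2)$, these vertices have pairwise distinct residues modulo $n$, and each such adjacency is preserved in $C_n(2,t)[U]$. Hence any hypothetical $(1,1,2)$-packing $3$-coloring of $C_n(2,t)$ would restrict to a $(1,1,2)$-packing $3$-coloring of a graph containing $G_t[U]$, contradicting Theorem~\ref{1122}; any extra wrap-around edges within $U$ only reinforce the obstruction.

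For the upper bound I would reuse the constructions from the proof of Theorem~\ref{1122}. In both cases treated there ($t = 4k-1$ and $t = 4k+1$), the periodic pattern of colors has length exactly $p = 2(t+2)$; since $n = pm$, the pattern descends to a well-defined coloring $\bar{f}$ of $C_n(2,t)$. What remains is to verify that the circular identification does not introduce forbidden short connections between same-colored vertices. For colors $1$ and $2$, any new adjacency in $C_n$ (beyond those in $G_t$) between lifted representatives in $[0,n)$ must come from an integer difference of $n-2$ or $n-t$; since $n-2 \equiv -2$ and $n-t \equiv -t \pmod{p}$, the periodicity of $f$ reduces these cases to adjacencies already handled by the validity of $f$ on $G_t$.

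The main delicate point, and where the specific form $n = 2(t+2)m$ is essential, is the analogous check for colors $3$ and $4$. A direct inspection of the two patterns shows that all same-colored pairs for these colors in $\mathbb{Z}$ differ by nonzero multiples of $p$. Hence in $C_n(2,t)$, same-colored pairs $a,b$ of color $3$ or $4$ satisfy $a - b \equiv kp \pmod{n}$ for some $k \in \{1, \ldots, m-1\}$ (the case $m=1$ is trivial, as each of colors $3$ and $4$ then contributes a single vertex). For $d_{C_n}(a,b) \le 2$ to hold, one would need $kp \equiv c \pmod{n}$ for some $c \in \{\pm 2, \pm 4, \pm t, \pm(t-2), \pm(t+2), \pm 2t\}$; but $|c| < p$ for every such $c$, whereas both $kp$ and $(m-k)p = n - kp$ are at least $p$, so no such congruence can occur.
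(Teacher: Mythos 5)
Your overall strategy coincides with the paper's: the paper disposes of this corollary in two lines by invoking the transfer principle stated at the start of its concluding section (a periodic pattern whose length divides $n$ descends to $C_n(2,t)$, and lower bounds for $G_t$ transfer to the circulant graph), then noting that both patterns from Theorem~\ref{1122} have length $2(t+2)$. Your upper-bound verification is correct and supplies exactly the details the paper omits: wrap-around adjacencies for colors $1,2$ reduce, via the $p$-periodicity of $f$ with $p=2(t+2)\mid n$, to adjacencies already handled in $G_t$; and same-colored pairs for colors $3,4$ differ by nonzero multiples of $p$, while every difference realizable in at most two steps of $C_n(2,t)$ has a representative of absolute value at most $2t<p$, so no conflict can arise.

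There is, however, an inaccuracy in your lower-bound half. The proof of Theorem~\ref{1122} is not confined to $U=\{0,2,\dots,2t,2t+2,t,t+2\}$: to propagate the color alternation along $Q$ past a possible color-$3$ vertex $2k$ it invokes the paths $Q'\colon 2k-2,\,2k-2+t,\,2k+t,\,2k+t+2,\,2k+2$, whose vertices range up to $3t$. So restricting a hypothetical coloring to $C_n(2,t)[U]$ with your $U$ does not reproduce the contradiction as stated. The argument is repairable (even for $m=1$, i.e.\ $n=2t+4$, all vertices actually used in that proof have distinct residues modulo $n$, and the relevant adjacencies and distance-at-most-$2$ relations persist, so extra edges indeed only reinforce the constraints), but a cleaner route --- and the one implicit in the paper --- is to pull back a hypothetical $(1,1,2)$-packing coloring of $C_n(2,t)$ along the reduction map $\ZZ\to\ZZ_n$. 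This pullback is a $(1,1,2)$-packing coloring of $G_t$: the map is a graph homomorphism, so distances do not increase for distinct residues, and two distinct integers congruent modulo $n$ differ by at least $2(t+2)>2t$, hence lie at $G_t$-distance greater than $2$. Theorem~\ref{1122} then yields the contradiction directly, with no bookkeeping of which vertices its proof touches.
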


Indeed, the patterns used when $t=4k+1$ (resp. $t=4k-1$) were:
$$(1122)^k132(1122)^k142 \mbox{ (resp. }(1122)^k3(1122)^k4),$$ 
the length of each of which is $2(t+2)$. 

In Theorem~\ref{thm:12222} we proved that $\chi_S(G_3) = 6$ for $S = (1,2,2,2,2,2, \ldots)$, where for the coloring we used the pattern $1123411562113451162311456$. The pattern has length $25$, and we derive the following result.
\begin{corollary}
	If $S=(1,2,2,2,2,2)$ and $n=25m$, where $m\in \NN$, then $\chi_S(C_n(2,3))=6$.
\end{corollary}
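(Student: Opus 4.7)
The plan is to deduce both bounds from Theorem~\ref{thm:12222} by exploiting two facts: the natural quotient map $\pi\colon\ZZ\to\ZZ/n\ZZ$ is a graph homomorphism from $G_3$ to $C_n(2,3)$, and the pattern $\sigma=1123411562113451162311456$ realizing Theorem~\ref{thm:12222} has period $25$ dividing $n$. Thus $\sigma$ descends to a well-defined coloring $\bar f\colon V(C_n(2,3))\to[6]$, and I will show $\bar f$ attains $\chi_S(C_n(2,3))=6$.

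For the upper bound, I will use the standard identity
\[
d_{C_n(2,3)}(\pi(\tilde i),\pi(\tilde j)) \;=\; \min_{\ell\in\ZZ}\, d_{G_3}(\tilde i,\tilde j+\ell n),
\]
which holds because any walk in $C_n(2,3)$ lifts edge-by-edge to a walk in $G_3$ ending at some representative of $j$, and conversely. If $\bar f(i)=\bar f(j)=c$, the period-$25$ property of $\sigma$ forces every lift $\tilde j+\ell n$ to share color $c$ with $\tilde i$ in the $G_3$-coloring $f$ of Theorem~\ref{thm:12222}. That theorem then yields $d_{G_3}(\tilde i,\tilde j+\ell n)\ge 2$ when $c=1$ and $\ge 3$ when $c\in\{2,\dots,6\}$; minimizing over $\ell$ and invoking the identity gives the required packing distance in $C_n(2,3)$.

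For the lower bound, I will mirror the local argument from Theorem~\ref{thm:12222}. Suppose, for contradiction, that $\bar f$ is an $S$-packing $5$-coloring of $C_n(2,3)$. If $\bar f(i)=\bar f(i+1)=1$ for some $i$, then $A=\{i-2,i-1,i+2,i+3,i+4\}\pmod n$ consists of five distinct vertices (since $n\ge 25$), each of which is $C_n$-adjacent to $i$ or $i+1$ and so cannot be colored $1$; moreover, any two members of $A$ lie at $C_n$-distance at most $2$ (being at $G_3$-distance at most $2$, and $\pi$ cannot increase distances). Hence the five vertices of $A$ need five distinct colors from $\{2,3,4,5\}$, impossible. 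Therefore any two color-$1$ vertices differ by at least $4$ modulo $n$, so among any seven consecutive vertices of $C_n(2,3)$ at most two are colored $1$; the remaining at least five are pairwise at $C_n$-distance $\le 2$ and need pairwise distinct colors from $\{2,3,4,5\}$, the final contradiction.

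The main subtlety is that wrap-around in $C_n$ could conceivably create shortcuts between same-color vertices or destroy the local obstructions above, but both issues are resolved uniformly by the two structural facts noted at the outset: the homomorphism $\pi$ only decreases distances, so the local obstructions transfer from $G_3$ to $C_n(2,3)$; and the divisibility $25\mid n$ ensures that every lift of a same-color pair in $C_n(2,3)$ is a same-color pair in $G_3$, where the $(1,2,2,2,2,2)$-packing bounds of Theorem~\ref{thm:12222} apply.
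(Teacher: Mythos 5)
Your proof is correct and follows essentially the same route as the paper: the upper bound is obtained exactly as there, by wrapping the period-$25$ pattern of Theorem~\ref{thm:12222} around $C_n(2,3)$ and using the fact that the quotient map $\ZZ\to\ZZ/n\ZZ$ does not increase distances. The only (harmless) difference is in the lower bound, where the paper invokes its general transfer principle --- a hypothetical $S$-packing $5$-coloring of $C_n(2,3)$ lifts to a periodic one of $G_3$, since identified integers are at distance at least $\lceil n/3\rceil>2$, contradicting $\chi_S(G_3)\ge 6$ --- whereas you re-run the local pigeonhole obstruction directly inside $C_n(2,3)$; both arguments are sound and of comparable length (and your explicit note that color-$1$ vertices must differ by at least $4$, using that differences $2$ and $3$ are edges, actually fills in a step the paper leaves implicit).
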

 
When $t> 3$, the following result is a consequence of Theorem \ref{t122} and the patterns used in its proof.
  
 \begin{corollary}
 For any odd integer $t>3$ and $S=(1,2,2,2,2,2)$, then 
 $\chi_S(C_n(2,t))=5$ if 

$ \begin{array}{l@{\quad}l}
\ 	\mbox{(i)  $t= 5$, $n=14m$, where $m\in \NN$ or,}\\
\ 	\mbox{(ii) $t= 4k-1, 3\nmid t$ and $n=6m>2t$, where $m\in \NN$ or,}\\
\ 	\mbox{(iii) $t= 4k-1, 3\mid t$ and $n=2(4k-3)m$, where $m\in \NN-\{1\}$.  }\\
\end{array} $\\
\end{corollary}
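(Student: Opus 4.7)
The plan is to derive the corollary from Theorem~\ref{t122} by transferring each of the three periodic patterns constructed in that theorem's proof directly to the circulant $C_n(2,t)$. The lower bound is immediate: when $n > 2t$, the graph $C_n(2,t)$ is $4$-regular, and the neighborhood argument from Theorem~\ref{t122} -- the four neighbors of any vertex colored $1$ are pairwise at distance $2$ and thus require four distinct colors from $\{2,3,4,5\}$ -- carries over verbatim.

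For the upper bound, I would fix a case and let $\pi$ denote the corresponding pattern from Theorem~\ref{t122}: namely $\pi = 11221331144155$ of length $14$ in case (i), $\pi = 123145$ of length $6$ in case (ii), and $\pi = 1(122133)^{v} 1(144155)^{v}$ of length $\ell = 2(4k-3)$ with $v = 2(k-1)/3$ in case (iii). In each case the hypothesis on $n$ is precisely that the pattern length $\ell$ divides $n$, so $f: V(C_n(2,t)) \to \{1,\ldots,5\}$ defined by $f(i) = \pi(i \bmod \ell)$ is well-defined. Because the same pattern gives an $S$-packing $5$-coloring of $G_t$ by Theorem~\ref{t122}, the only potentially new conflicts arise from wrap-around: pairs $a, b \in \{0,\ldots,n-1\}$ whose cyclic distance in $C_n(2,t)$ is at most $2$ while their representatives differ by $n-2$, $n-4$, $n-t$, $n-(t\pm 2)$, or $n-2t$.

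The key reduction is that, since $\pi$ is $\ell$-periodic and $\ell \mid n$, for same-color vertices $a, b$ the difference $(a-b) \bmod n$ must reduce modulo $\ell$ to an element of $P_c - P_c$, where $P_c$ is the set of positions of color $c$ within one period. It therefore suffices to verify, for each case, that no element of the distance-$\leq 2$ set $\{\pm 2, \pm 4, \pm t, \pm(t-2), \pm(t+2), \pm 2t\}$ (respectively $\{\pm 2, \pm t\}$ for color $1$) has its residue modulo $\ell$ lying in $P_c - P_c$. I expect the main obstacle to be case (iii), since its pattern is longest and the color classes form the most intricate arithmetic progressions modulo $\ell$; nevertheless, the verification remains finite and largely parallels the divisibility arguments already appearing in the proof of Theorem~\ref{t122}. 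In case (ii) the check is particularly clean: each of colors $2, 3, 4, 5$ appears once per period, so same-color differences are multiples of $6$, and $3 \nmid t$ ensures that $2t$ -- the only potentially problematic element -- is not a multiple of $6$; the hypothesis $n > 2t$ then guarantees that $\pm 2t$ and the other forbidden values remain distinct modulo $n$, preventing wrap-around shortcuts.
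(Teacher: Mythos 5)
Your proposal is correct and follows essentially the same route as the paper, which derives the corollary directly from the three periodic patterns in the proof of Theorem~\ref{t122}, applied whenever the pattern length divides $n$; your additional explicit check that wrap-around differences reduce modulo the period to the same residues already handled in $G_t$ is exactly the (implicit) justification the paper relies on.
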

 
The $S$-packing $5$-coloring when $t=4k+1$, where $k>1$, is more complex and cannot be directly applied to color $C_n(2,t)$.

Now we list some consequences of our results for distance colorings of $C_n(2,t)$.
Corollary~\ref{col} implies the following result (the pattern is simply $12\ldots (3d+1)$).
\begin{corollary}\label{col2}
If $d\ge 2$  is an integer, and $n=(3d+1)m$, where $m\in \NN$, then $\chi_d(C_n(2,3)) = 3d+1$. 
\end{corollary}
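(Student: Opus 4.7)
My plan is to handle the two bounds separately, in both cases leveraging the infinite-graph result from Corollary~\ref{col} and the basic observation that a walk of length $\ell$ in $G_3$ projects to a walk of length $\ell$ in $C_n(2,3)$ (the step sequence $\pm 2, \pm 3$ is the same in both graphs).

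First I would establish the lower bound $\chi_d(C_n(2,3)) \geq 3d+1$. The proof of Theorem~\ref{lowdist}, specialized to $t=3$, exhibits $3d+1$ consecutive integers, say $\{0,1,\ldots,3d\}$, that are pairwise at distance at most $d$ in $G_3$. By the projection remark above, $d_{C_n(2,3)}(u,v) \leq d_{G_3}(u,v) \leq d$ for all $u,v$ in this set, so these $3d+1$ vertices of $C_n(2,3)$ are pairwise at distance at most $d$ and must receive pairwise distinct colors.

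For the upper bound I would color vertex $i$ of $C_n(2,3)$ by $(i \bmod (3d+1))+1$, which uses exactly $3d+1$ colors and is well-defined because $(3d+1)\mid n$. Two vertices $u\ne v$ share a color exactly when $u\equiv v\pmod{3d+1}$. A walk of length $\ell$ in $C_n(2,3)$ from $u$ to $v$ lifts (by recording the signed step taken at each stage) to a walk of length $\ell$ in $G_3$ from $u$ to some integer $y$ with $y\equiv v\pmod n$; since $(3d+1)\mid n$, every such $y$ is a multiple of $3d+1$, and it is nonzero as long as $u\ne v$ in $C_n(2,3)$. Thus the upper bound reduces to showing $d_{G_3}(0,k(3d+1))>d$ for every $k\in\ZZ\setminus\{0\}$.

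This single distance estimate is the only technical point, and I expect it to be the main obstacle (if there is any at all). Since every edge of $G_3$ connects integers differing by at most $3$, any walk of length $\ell$ in $G_3$ starting from $0$ ends at an integer of absolute value at most $3\ell$; equivalently, $d_{G_3}(0,y)\geq |y|/3$ for every $y\in\ZZ$. Applying this with $y=k(3d+1)$ yields $d_{G_3}(0,y)\geq (3d+1)/3>d$, and since the distance is an integer this forces $d_{G_3}(0,y)\geq d+1$, completing the verification. Once the lifting/projection argument between $C_n(2,3)$ and $G_3$ is in place, both bounds fall out immediately.
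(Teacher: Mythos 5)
Your proof is correct and follows essentially the same route as the paper: the lower bound is transferred from the distance graph $G_3$ (via the fact that circulant distances are at most the corresponding distances in $G_3$), and the upper bound uses exactly the periodic pattern $12\ldots(3d+1)$ that the paper cites. The only addition is that you actually verify the pattern via the lifting argument and the elementary estimate $d_{G_3}(0,y)\ge |y|/3$, a detail the paper leaves implicit; this is a clean and valid way to close that gap.
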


For $t\ge 5$, Theorem~\ref{exactd} implies the following result.
\begin{corollary}
If $t\ge 5$ is an odd integer, $d\ge t-3$ and $n = m(1+t\left(d-\frac{t-3}{2}\right))$, where $m \in \NN$, then 
$	\chi_d(C_n(2,t)) = 1+t\left(d-\frac{t-3}{2}\right)$.
\end{corollary}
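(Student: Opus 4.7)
The plan is to transfer both bounds from the distance graph $G_t$ to the circulant graph $C_n(2,t)$ via the natural quotient map $\pi:\ZZ\to\ZZ/n\ZZ$, which, under the standing assumption $n\ge 2t$, is a surjective graph homomorphism from $G_t$ onto $C_n(2,t)$. Write $\ell=1+t\bigl(d-\tfrac{t-3}{2}\bigr)$, so that by hypothesis $n=m\ell$.

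For the lower bound $\chi_d(C_n(2,t))\ge\ell$, the argument in the proof of Theorem~\ref{lowdist} establishes that the $\ell$ consecutive integers $0,1,\dots,\ell-1$ are pairwise at distance at most $d$ in $G_t$. Because a graph homomorphism cannot increase distances, their images under $\pi$ are pairwise at distance at most $d$ in $C_n(2,t)$, and they remain pairwise distinct since $\ell\le n$. Any $d$-distance coloring of $C_n(2,t)$ must therefore assign them $\ell$ different colors.

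For the upper bound $\chi_d(C_n(2,t))\le\ell$, I would take the coloring $f:\ZZ\to[\ell]$ given by the repeating pattern $123\cdots\ell$ used in the proof of Theorem~\ref{exactd}; by that theorem it is a valid $d$-distance coloring of $G_t$, and since $n=m\ell$ it is periodic modulo $n$ and thus descends to a well-defined coloring $\bar f:V(C_n(2,t))\to[\ell]$. To verify that $\bar f$ is a $d$-distance coloring, suppose for contradiction that distinct vertices $\bar a\ne\bar b$ satisfy $\bar f(\bar a)=\bar f(\bar b)$ and $d_{C_n(2,t)}(\bar a,\bar b)\le d$. Fix a representative $a\in\pi^{-1}(\bar a)$ and lift a shortest $\bar a,\bar b$-walk in $C_n(2,t)$ step by step into $G_t$ starting at $a$; the result is a walk of the same length from $a$ to some $b'\in\pi^{-1}(\bar b)$, hence $d_{G_t}(a,b')\le d$. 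Since $f$ is constant on each fiber of $\pi$ we have $f(a)=f(b')$, and $a\ne b'$ (otherwise $\bar a=\bar b$), contradicting that $f$ is a $d$-distance coloring of $G_t$.

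The only technical point is the walk-lifting used above; this is unproblematic because each edge of $C_n(2,t)$ is labelled by a unique generator in $\{\pm 2,\pm t\}$ (these four having distinct nonzero images in $\ZZ/n\ZZ$ whenever $n\ge 2t+1$), so the next vertex of the lift in $G_t$ is determined. If one prefers to avoid lifting altogether, an equivalent formulation is $d_{C_n(2,t)}(\bar a,\bar b)=\min_{j\in\ZZ}d_{G_t}(a,b+jn)$, and then the inequality $td<2\ell$ noted inside the proof of Theorem~\ref{exactd} reduces the verification to the single case $d_{G_t}(0,\pm\ell)>d$ already established there.
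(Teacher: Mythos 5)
Your proposal is correct and follows essentially the same route as the paper, which treats this corollary as an immediate consequence of Theorems~\ref{lowdist} and~\ref{exactd}: the pattern $12\cdots\ell$ descends to $C_n(2,t)$ because $\ell\mid n$, and the lower bound transfers because the quotient map $\ZZ\to\ZZ/n\ZZ$ does not increase distances. You merely make explicit the walk-lifting (or, equivalently, the identity $d_{C_n(2,t)}(\bar a,\bar b)=\min_j d_{G_t}(a,b+jn)$ combined with $td<2\ell$) that the paper leaves implicit.
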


For the upper bound we use the pattern for consecutive vertices of $C_n(2,t)$, which contains integers from $1$ to $1+t\left(d-\frac{t-3}{2}\right)$ in the natural order (each integer appears once).

Combining Theorem~\ref{2dist} and its proof with Corollary~\ref{col2} we get:

\begin{corollary}
	If $t>3$ is an odd integer, then 
	$$\chi_2(C_n(2,t))=\left\{
	\begin{array}{ccl}
	5 &;&  t \equiv 1, 9 \pmod {10} \mbox{ and } n = 5m\geq 4t,m\in\NN\\
	6 &;&  t \in\{5,7,13\} \mbox{ and } n = 6m\geq 4t,m\in\NN \\
	6 &;&  t \equiv 3,5,7 \pmod {10} , t\geq 15 \mbox{ and } n = sm\geq 4t,m\in\NN,\\
	\end{array}
	\right.$$
	where $s = 5k+6\ell, \ell<5, t+1=5a+\ell, k = a-\ell$.
	Furthermore, $\chi_2(C_n(2,3))=7$ for $n = 7m$, where $m\in\NN$. 
\end{corollary}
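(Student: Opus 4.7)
The plan is to deduce the corollary from Theorem~\ref{2dist} by relating $\chi_2(C_n(2,t))$ to $\chi_2(G_t)$ in both directions, using the hypothesis $n\ge 4t$ to suppress wrap-around effects and the divisibility of $n$ by the pattern length to make the periodic constructions cyclic.

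For the lower bounds I would argue by pull-back. A $k$-coloring $f$ of $C_n(2,t)$ induces a periodic map $f':\ZZ\to[k]$ via $f'(x)=f(x\bmod n)$, and I would check that when $n\ge 4t$ this $f'$ is a $2$-distance coloring of $G_t$. Indeed, if $x,y\in\ZZ$ are distinct with $d_{G_t}(x,y)\le 2$, the $G_t$-path from $x$ to $y$ projects to a path of equal length in $C_n(2,t)$, so provided $x\not\equiv y\pmod n$ one concludes $f'(x)\ne f'(y)$; in the alternative $x\equiv y\pmod n$ with $x\ne y$ we have $|x-y|\ge n\ge 4t$, which forces $d_{G_t}(x,y)\ge 4$ since each edge of $G_t$ shifts the coordinate by at most $t$. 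Hence $\chi_2(G_t)\le\chi_2(C_n(2,t))$, and the lower bounds $5$, $6$, $6$ in the three main cases follow immediately from Theorem~\ref{2dist}. The degenerate case $t=3$, $n=7$ (which violates $n\ge 4t$) is handled separately: every pair of distinct vertices of $C_7(2,3)$ turns out to be at distance at most $2$, forcing $\chi_2(C_7(2,3))\ge 7$.

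For the upper bounds I would apply, cyclically to the vertex labels $0,1,\dots,n-1$, the four patterns already constructed in the proof of Theorem~\ref{2dist}: $12345$ (length $5$) when $t\equiv 1,9\pmod{10}$; $123456$ (length $6$) when $t\in\{5,7,13\}$; $(12345)^k(123456)^\ell$ (length $s=5k+6\ell=t+1$) when $t\equiv 3,5,7\pmod{10}$ with $t\ge 15$; and $1234567$ (length $7$) when $t=3$. Divisibility of $n$ by the pattern length makes each cyclic assignment consistent. Properness transfers from $G_t$: for $n\ge 4t$ (or $n=7$ when $t=3$), any two vertices at $C_n(2,t)$-distance at most $2$ differ by a unique integer in $\{\pm 2,\pm 4,\pm(t-2),\pm t,\pm(t+2),\pm 2t\}$, and each such shift was shown in Theorem~\ref{2dist} to produce distinct colors in the pattern.

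The principal obstacle is confirming that the most intricate pattern—the mixed one of length $s=t+1$ in the case $t\equiv 3,5,7\pmod{10}$ with $t\ge 15$—remains proper under cyclic closure. Reducing the six forbidden shifts modulo $s$ yields $\{\pm 1,\pm 2,\pm 3,\pm 4\}$, so the cyclic check coincides with the linear one already carried out for $G_t$, and the bound $n\ge 4t>2s$ prevents any of these shifts from acquiring an additional representative modulo $n$. A minor residual point is the boundary case $t=3$, $n=7$, whose lower bound is direct and whose upper bound assigns distinct colors to all seven vertices.
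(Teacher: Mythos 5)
Your argument is correct and takes essentially the same route as the paper, which obtains this corollary by observing that the periodic patterns from the proof of Theorem~\ref{2dist} can be applied cyclically to $C_n(2,t)$ whenever the pattern length divides $n$, and that lower bounds for $G_t$ transfer to the corresponding circulant graphs. You merely make the pull-back and wrap-around verifications explicit (and correctly isolate the degenerate case $t=3$, $n=7$), so there is nothing to fix.
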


We conclude the paper by proposing an investigation of $S$-packing colorings of circulant graphs. The case $C_n(2,t)$ could be a starting point, where one should resolve the missing cases, which are not covered in this section. The next natural candidates to consider are the circulant graphs $C_n(1,t)$.

\section*{Acknowledgements}
We thank P\v remysl Holub for initial discussions on the topic.
B.B. and J.F. acknowledge the financial support of the Slovenian Research Agency (research core funding No.\ P1-0297 and projects J1-9109 and J1-1693).
K.K. was partially supported by the project GA20–09525S of the Czech Science Foundation.

\end{document}